\documentclass[12pt, english,reqno]{amsart}
\usepackage[margin=1.25in]{geometry}

\usepackage{amsmath,amsthm,amssymb,mathtools}
\usepackage[T1]{fontenc}
\usepackage[utf8]{inputenc} 
\usepackage{microtype}
\usepackage{enumitem}
\usepackage{xcolor}
\usepackage{bbm}
\usepackage{hyperref}
\usepackage[nameinlink,noabbrev]{cleveref}

\hypersetup{
  colorlinks=true,
  linkcolor=blue,
  citecolor=blue,
  urlcolor=blue
}

\usepackage[
  backend=biber,
  style=alphabetic,
  maxnames=5,       
  minnames=5,       
  maxbibnames=5,    
  minbibnames=5,     
  maxalphanames=5,  
  minalphanames=5
]{biblatex}
\numberwithin{equation}{section}

\theoremstyle{plain}
\newtheorem{theorem}{Theorem}[section]
\newtheorem{lemma}[theorem]{Lemma}

\newtheorem{corollary}[theorem]{Corollary}

\theoremstyle{definition}
\newtheorem{definition}[theorem]{Definition}

\theoremstyle{remark}
\newtheorem{remark}[theorem]{Remark}

\newcommand{\R}{\mathbb{R}}
\newcommand{\N}{\mathbb{N}}

\newcommand{\Sph}{\mathbb{S}}

\newcommand{\ep}{\varepsilon}

\author{Xavier Fern\'andez-Real}
\address{EPFL SB, Station 8, 1015 Lausanne, Switzerland}
\email{xavier.fernandez-real@epfl.ch}

\author{Enric Florit-Simon}
\address{ETH Zurich, R\"amistrasse 101, 8092 Zurich,
Switzerland}
\email{enric.florit@math.ethz.ch}

\author{Joaquim Serra}
\address{ETH Zurich, R\"amistrasse 101, 8092 Zurich,
Switzerland}
\email{joaquim.serra@math.ethz.ch}

\subjclass{35J61, 49Q05}
\keywords{Geometric variational problems, minimal surfaces, improvement of flatness, semilinear elliptic equations, stable and finite Morse index solutions}

\title{Improvement of flatness in annuli}

\dedicatory{Dedicated to the memory of Louis Nirenberg, who taught us that\\ revisiting classical results through new proofs is often a path to progress.}

\thanks{X. F. was supported by the Swiss National Science Foundation (SNF grant PZ00P2\_208930), by
the Swiss State Secretariat for Education, Research and Innovation (SERI) under contract number
MB22.00034, and by the AEI project PID2021-125021NA-I00 (Spain). E. F. and J. S. were supported by the European Research Council under the Grant Agreement No 948029.}

\date{} 

\begin{document}

\maketitle

\begin{abstract}
We present a short and flexible improvement-of-flatness argument adapted to the setting of exterior domains, where one is naturally led to work with annuli instead of balls.

As a model application in the classical setting of minimal surfaces, we give an alternative proof of the end-structure and asymptotics for finite Morse index minimal hypersurfaces with Euclidean area growth in low dimensions.

The method is largely PDE-based and general in its application. Suitable variants have been employed in Bernoulli and Allen--Cahn settings.
\end{abstract}

\medskip 

\section{Introduction}

Improvement-of-flatness type arguments are a basic tool in the  regularity theory for nonlinear geometric PDE (minimal surfaces,
free boundaries, phase transitions etc.).  In a typical formulation, sufficient flatness of a solution---or interface---in $B_1$, measured by proximity to a model profile (often a lower-dimensional plane), implies improved flatness after rescaling to a smaller ball $B_\rho$, possibly relative to a slightly adjusted profile. This paradigm goes back to De~Giorgi's work \cite{degiorgi1961frontiere} on area-minimizing hypersurfaces and has many extensions in geometric measure theory and PDEs: for example, in minimal surfaces \cite{allard1972varifold,taylor1976soap, Taylor1973Mod3, White1979Mod4, Sim93, SS81};
free boundary problems
\cite{altcaffarelli1981onephase,caffarelli1987harnackI,desilva2011rhs,desilvasavin2019degenerate,dephilippisspolaorvelichkov2021twophase,
altcaffarellifriedman1984twophase, weiss1999homogeneity, Weiss1999FreeBoundary, fernandezreal2020regularity}; and semilinear PDEs and systems \cite{CaffarelliLin2008, Savin2009FlatLevelSets,Wang17, DePhilippisHalavatiPigati2024DecayExcess,AudritoSerra2021} 

When one studies \emph{exterior} problems---for example, the ends of embedded minimal hypersurfaces,
or exterior free boundary configurations---the natural scale-invariant domains are annuli rather
than balls.  In this setting one is led to track flatness on dyadic annuli
$B_{2r}\setminus B_{r/2}$ as $r\to\infty$ (or $r\to 0$).  In the present work, we record a short
and simple annular improvement-of-flatness scheme for such a setting.
 
The main difference compared with standard improvement-of-flatness results in balls is the following. Consider a solution on an annulus \(B_{r_2}\setminus B_{r_1}\) with a large scale separation
\(r_2/r_1\gg 1\), and assume the solution is sufficiently flat at every intermediate scale between
\(r_1\) and \(r_2\).
In this annular setting, flatness improves geometrically as one moves inward from the outer scale
\(r_2\), but only down to a \emph{mesoscale}
\begin{equation}
\label{eq:rast}
r_* \;=\; r_1^{\beta}\, r_2^{\,1-\beta},
\qquad \beta\in(0,1),
\end{equation}
where \(\beta\) depends on the problem.
Below \(r_*\), the trend reverses: flatness \emph{deteriorates}, again at a geometric rate, as one
continues inward from \(r_*\) down to \(r_1\).
This loss of flatness reflects the existence, in annular domains, of ``modes'' (solutions of the
linearized problem) associated with negative homogeneities.

Of course, from a coarse or “low-resolution” perspective, this idea is not new, and implicit variants have appeared in several contexts before; see also the symmetric (log-)epiperimetric approach of \cite{EdelenSpolaorVelichkov2024}. However, it leads rather quickly to significant progress in quite well-known problems---such as the classification of finite-index solutions to the Allen–Cahn and Bernoulli problems (see below)---suggesting that this mechanism is not yet part of the ``standard analytical toolbox''. Given the simplicity and flexibility of the method, we believe it is worthwhile to record it explicitly here, both to prevent it from being overlooked and to facilitate its adaptation to other settings.

For concreteness, we apply the argument to the classical setting of minimal surfaces, where we give an alternative
proof of the known end-structure and asymptotics for finite Morse index minimal hypersurfaces with
Euclidean area growth in dimensions $3\le n\le 7$.   

It should be clear from the proof that the approach is flexible, and that essentially the same line of reasoning applies in a wide range of settings in which an improvement-of-flatness estimate is available, in the spirit of De Giorgi, Allard, Savin, and others.

As a matter of fact, the annular iteration scheme presented here can be viewed as the ``baby version'' (or conceptual seed) of one of the central arguments  introduced in \cite{CFFS25} to study stable solutions of the  Bernoulli free boundary problem in three dimensions. In that setting, a similar scheme appears in a substantially more elaborate form: the natural linearization domains are no longer annuli, but rather balls from which a (possibly large) number of smaller balls have been removed, the so-called ``neck balls''. The annular case here presented would correspond to the situation in which all ``neck balls'' happen to be clustered inside a fixed ball around the origin. Within \cite{CFFS25}, the simple underlying mechanism discussed below is embedded into a considerably more involved framework, which obscures its essential structure.

Our goal is to isolate this mechanism and present it in a transparent geometric setting, so that it can be readily recognised and adapted elsewhere.  There is, moreover, a number of recent works where these ideas are applied:
\begin{itemize}
\item As mentioned, clear  analogies can be drawn between the annular improvement of flatness and one for stable solutions of the Bernoulli problem in \cite{CFFS25}. The same occurs in \cite{FS25} in the setting of stable solutions to the Allen-Cahn equation with energy density bounds.
    \item The particular version presented here---which is for minimal hypersurfaces---finds its root in the study of global solutions to the Bernoulli problem with finite index instead. We refer the reader to the article \cite{FFS25}, which shows that global solutions to the Bernoulli problem with finite index in three dimensions are axially symmetric.
    \item A further variant is also an essential starting point in \cite{Flo25}, which establishes parallels to a well-known conjecture of De Giorgi for solutions to the Allen--Cahn equation with finite index.
    \item While the cases listed above pertain to recent works, the flexibility of the method makes it applicable far more broadly: it can be used to study “exterior geometry” in essentially any problem in which some form of improvement of flatness is available.
    \end{itemize}

\section{Main results}
We recall that a minimal hypersurface $\Sigma\subset D$ is a critical point of the area functional in $D$, and it is geometrically characterised by having mean curvature ${\rm H}_\Sigma\equiv 0$. Whenever it can be written (locally) as a graph of some function $f$, this graph then satisfies the minimal graph equation
$$
{\rm div}\left(\frac{\nabla f}{\sqrt{1+|\nabla f|^2}}\right)= 0.
$$
We will always work with smooth minimal hypersurfaces satisfying suitable completeness assumptions. To be precise, unless otherwise specified, the statement ``$\Sigma$ is a minimal hypersurface in $D$'' will mean:
\begin{itemize}
    \item $D\subset \R^n$ is an open domain.
    \item $\Sigma\cap D$ is a smooth, embedded, $(n-1)$-dimensional submanifold, without boundary in the manifold sense, and satisfying the completeness condition $\overline{(\Sigma\cap D)}\setminus (\Sigma\cap D)\subset\partial D$.
    \item There holds ${\rm H}_{\Sigma\cap D}\equiv 0$.
\end{itemize}

To state our main result, it is convenient to define:
 \begin{definition}[Annular heights]\label{defi:heightminimal} Let $n\ge 3$. Let $\Sigma$ be a minimal hypersurface in $D\subset\R^n$. We define the \emph{shifted annular height} in the direction $e\in \mathbb S^{n-1}$, with shift $b\in \R$, and at scale $r>0$ with $B_{2r}\setminus \overline{B_{r/2}}\subset D$, as:
\[
    H_b(\Sigma,e,r) := \inf\big\{  h>0 \  : |e\cdot x-b|\leq h\quad\mbox{on}\quad\Sigma\cap (B_{2r}\setminus \overline{B_{r/2}}) \big\}.
\]
The \emph{centered annular height} is the particular case $b=0$, $H_0(\Sigma, e, r)$.

We also define 
\[H(\Sigma,r) : = \inf_{e\in \mathbb{S}^{n-1},\, b \in \R} H_b(\Sigma,e, r)\qquad \mbox{and}\qquad H_0(\Sigma,r) : = \inf_{e\in \mathbb{S}^{n-1}} H_0(\Sigma,e, r).\]
\end{definition}

Notice that we have the scaling
\begin{equation}
    \label{eq:scalingminimal}
    H_b(\Sigma, e, r) = \rho H_{b/\rho} (\Sigma_\rho, e, r/\rho),\qquad\text{for}\quad \rho  >0 \quad \text{and} \quad \Sigma_\rho(x) := \frac{1}{\rho}\Sigma. 
\end{equation}

Our improvement-of-flatness result in annuli is:
\begin{theorem}\label{thm:improvminimalH}
 Let $n \ge 3$.  Let $\Sigma$ be a minimal hypersurface in $B_R\setminus \overline {B_1}$ for some $R\geq 4$. Let $C_0>0$ and $\alpha\in(0,1)$. There are $C=C(n,\alpha,C_0)$ and $\eta_0=\eta_0(n,\alpha,C_0)>0$ such that the following holds:

Let $\eta\leq\eta_0$, and suppose that for every $r\in [2,R/2]$ we have that $\Sigma\cap (B_{2r}\setminus \overline{B_{r/2}})$ is connected, $|{\rm II}_{\Sigma\cap (B_{2r}\setminus \overline{B_{r/2}})}| \le C_0/r$, and
\begin{equation}\label{flatnesshypothesisthmminimal}
    H_0(\Sigma,r) \le \eta r.
\end{equation}
Then,
\begin{equation}\label{goalminimal}
H(\Sigma,r) \le C\eta r\left[ r^{3-n-\alpha} + (r/R)^\alpha \right] \qquad\text{for all}\quad r\in (2, R/2). 
\end{equation}
\end{theorem}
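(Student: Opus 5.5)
The plan is to reduce everything to the linearized problem, the Jacobi/Laplace equation on annuli, where harmonic functions split into modes of positive and negative homogeneity. First, using the curvature bound $|{\rm II}|\le C_0/r$, connectedness, and the flatness hypothesis \eqref{flatnesshypothesisthmminimal} with $\eta\le\eta_0$ small, we write $\Sigma$ on each dyadic annulus $B_{2r}\setminus \overline{B_{r/2}}$, $r\in[2,R/2]$, as a single graph $x_n = u(x')$ over the plane $e^\perp$. Here $e$ is the direction nearly achieving $H_0(\Sigma,r)$. Note that $e$ changes by $O(\eta)$ between consecutive dyadic scales, so after fixing one direction the graph is global over $\{2\lesssim |x'|\lesssim R/2\}$ with $|u|\le C\eta|x'|$ and $|\nabla u|\le C\eta$. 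Elliptic estimates then give $\|u\|_{C^{2,\gamma}}$ control at each scale, and $u$ solves the minimal surface equation. Hence $\Delta u = {\rm div}(a(\nabla u)\nabla u)$ with $|a|\lesssim |\nabla u|^2$, so $u$ is harmonic up to a quadratic error of relative size $\eta^2$.

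The core is a three-annulus/improvement lemma at a fixed large scale ratio $M$. Suppose $\Sigma$ is $\ep$-flat with respect to a plane in $B_{Mr}\setminus B_{r/M}$, meaning $H\le \ep r$ at all intermediate scales. We compare $u$ (rescaled) with a harmonic function on the annulus in $\R^{n-1}$, expanded in spherical harmonics: $\sum_k (a_k\rho^k + b_k\rho^{-(k+n-3)})\phi_k$, with the $\log\rho$ term replacing $\rho^0$ when $n=3$. By a compactness argument (contradiction, blow-up of $u/\ep$, Arzelà–Ascoli using the uniform gradient bounds), $u$ is $o(\ep)$-close to such a harmonic $h$. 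After subtracting the affine part (the $k=0$ constant, i.e. the shift $b$, and the $k=1$ growing mode, i.e. the tilt of $e$), the remaining modes either grow like $\rho^{k}$ with $k\ge2$, or decay like $\rho^{-(k+n-3)}$ with exponent $\ge n-3$. The $\rho^{-(n-3)}$ constant mode is the catenoidal one, and its height equals a shift to leading order. Choosing $M$ large relative to $\alpha$, the height at scale $r$ then satisfies a discrete inequality of the form
\[
h(r)\le \tfrac12\big( M^{-\alpha}h(Mr) + M^{-(n-3)-\alpha}h(r/M)\big)\quad\text{up to the mode }\rho^{3-n},
\]
or more precisely a splitting $h = h^{\rm in}+h^{\rm out}$, where $h^{\rm out}(r)/r$ improves by $M^{-\alpha}$ going inward and $h^{\rm in}(r)/r$ improves by $M^{3-n-\alpha}$ going outward.

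Second, we iterate. Define $\theta(r)=H(\Sigma,r)/r$, initially $\le C\eta$. The dyadic three-annulus inequality, a discrete maximum principle for a second-order difference inequality in $\log r$, yields
\[
\theta(r)\le C\eta\big[(r/R)^{\alpha}+(r/2)^{3-n-\alpha}\big],
\]
by comparison with the supersolution given by the right side. The boundary values at $r\sim 2$ and $r\sim R/2$ are bounded by $C\eta$. This is exactly \eqref{goalminimal}. The plane and shift are allowed to change from scale to scale, which is why the conclusion is stated for $H$ and not $H_0$.

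The main obstacle is making the mode decomposition rigorous while the optimal plane and shift drift with scale. We must check that the drifts are summable, which is geometric summability in the mesoscale picture, so a single graph representation persists. We must also handle $n=3$, where the logarithmic mode appears and the exponent $3-n-\alpha=-\alpha$ is only barely decaying. I would first attempt the compactness proof of the three-annulus lemma with $M=M(n,\alpha)$ and then $\eta_0$ chosen accordingly.
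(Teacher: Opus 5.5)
Your route differs from the paper's, and in outline it works. You prove a quantitative three-annulus inequality for $\theta(r)=H(\Sigma,r)/r$ at a fixed ratio $M$. For harmonic functions modulo affine ones, the modes $\rho^k$ ($k\ge 2$) and $\rho^{-(k+n-3)}$ (with $\log\rho$ when $n=3$) are uniformly almost orthogonal on annuli. This gives $\theta(1)\le C\bigl(M^{-1}\theta(M)+M^{2-n}\theta(1/M)\bigr)$, which beats $\tfrac12\bigl(M^{-\alpha}\theta(M)+M^{3-n-\alpha}\theta(1/M)\bigr)$ once $M$ is large. Compactness transfers this to $\Sigma$, and a discrete maximum principle in $\log r$ closes the argument, because $\Phi(r)=(r/R)^\alpha+r^{3-n-\alpha}$ satisfies
\[
\tfrac12\bigl(M^{-\alpha}\Phi(Mr)+M^{3-n-\alpha}\Phi(r/M)\bigr)=\tfrac12\bigl(1+M^{3-n-2\alpha}\bigr)\Phi(r)<\Phi(r).
\]
The paper never states a three-annulus inequality. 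It argues by contradiction at the scale $r_\star$ maximizing $H(\Sigma,r)/\phi(r)$, which is a continuous version of your maximum principle. Maximality and $\phi(tr_\star)/\phi(r_\star)\le\psi(t)$ give the two-sided growth $H\le\eta_\star\psi(t)$ at every rescaled scale. The tilt and shift drifts are then dominated by geometric sums, so a single plane works on all of $(\delta,1/\delta)$. The only linear input is the qualitative Liouville statement of Lemma~\ref{lem:compactnessminimal}. The paper's approach thus needs no spectral decomposition, which is what makes it portable to Bernoulli and Allen--Cahn. Yours needs graphicality only over $O(\log M)$ dyadic scales and is more modular, but it relies on explicit knowledge of the homogeneous Jacobi fields and on uniform-in-$k$ estimates.

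Several statements in your sketch should be corrected.

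\begin{itemize}
\item The opening claim that one fixed direction gives a single graph over $2\lesssim|x'|\lesssim R/2$ with $|\nabla u|\le C\eta$ is unjustified. The optimal normals move by $O(\eta)$ per dyadic scale, which a priori only gives a total drift of $O(\eta\log R)$.
\item For the same reason, the ``main obstacle'' you name is not one. Summability of drifts is not available a priori, and your scheme does not need it. Each application of the three-annulus lemma sees only $O(\log M)$ dyadic scales, so the drift is $O(\eta\log M)$, which is harmless once $\eta_0$ is chosen after $M$. Moreover, the conclusion concerns $H$ rather than a fixed plane.
\item The catenoidal mode is not a shift to leading order. Being radial, its best affine approximation on an annulus at scale $r$ is a constant, which leaves a genuine height $\sim|b_0|r^{3-n}$. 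In $\theta$ it therefore decays outward like $M^{2-n}$ (also for $\log\rho$ when $n=3$) and simply joins the decaying family. The case $n=3$ is not delicate, because $\alpha<1$.
\item Your displayed inequality must be read for $\theta=h/r$, not for $h=H$. Read for $H$, it is too weak, and $r\Phi(r)$ is not a supersolution.
\item In the compactness proof, normalize by the maximum of $\theta$ over the window, not by the a priori bound $\ep$. Otherwise the blow-up may be affine and give no contradiction.
\item At scales within a factor $M^2$ of $2$ and of $R/2$, use $\theta\le\eta$ as boundary data.
\end{itemize}
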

\begin{remark}\label{rem23}
    The two contributions in \eqref{goalminimal} correspond to the two competing mechanisms described in the introduction: the term
$(r/R)^{\alpha}$ propagates the improvement coming from the outer scale $R$, whereas $r^{3-n-\alpha}$ is the
contribution of the ``bad'' (inner) mode that eventually prevents a monotone inward improvement in general.
The associated mesoscale $r_*$ in \eqref{eq:rast} is obtained by balancing the two terms,
\[
r_\ast^{\,3-n-\alpha}=(r_\ast/R)^{\alpha}
\qquad\Longleftrightarrow \qquad
r_\ast = R^{\frac{\alpha}{\,n+2\alpha-3\,}} .
\]
By scaling back to a general annulus $B_{r_2}\setminus B_{r_1}$, this takes the form 
\[
r_\ast = r_1^{\beta}\,r_2^{1-\beta},
\qquad
\beta=\frac{n+\alpha-3}{n+2\alpha-3}\in(0,1),
\]
(cf. \eqref{eq:rast}). 
\end{remark}

\vspace{2mm}

To state our next result, we introduce some further notation.
Throughout the paper, $B'_r(y)$ denotes the ball in $\R^{n-1}$ of radius $r$ and center $y\in\R^{n-1}$, and $B_r':=B_r'(0)$. Given a set $\Omega \subset \mathbb{R}^{n-1}$ and a function $g : \Omega \to \mathbb{R}$, we denote by ${\rm graph}\, g$ the set of points in $\Omega \times \mathbb{R}$ satisfying $x_n = g(x_1,...,x_{n-1})$. \\

A model application of Theorem~\ref{thm:improvminimalH}, for which a new approach will be outlined at the end of the article, is the following classical result which follows from the arguments in \cite{Tysk89,Schoen83} (see also \cite{Anderson84, FC85, SS81}):
\begin{theorem}\label{thm:TyskIndex}
    Let $3\leq n \leq 7$ and $\beta\in(0,1)$. Let $\Sigma\subset\R^n$ be a complete, embedded minimal hypersurface with finite Morse index, satisfying ${\rm Area}(\Sigma\cap B_R)\leq CR^{n-1}$ for some $C$ and all $R>0$. Then, there exist some $R_0>0$, $e\in\Sph^{n-1}$, $N\in\N$, as well as smooth functions $f_i:\R^{n-1}\setminus B_{R_0}'\to\R$ for each $i=1,...,N$ with $f_1<...<f_N$, such that, up to a rotation,
    \begin{equation}
        \Sigma \setminus \left(B_{R_0}'\times[-R_0,R_0]\right)=\bigcup_{i=1}^N {\rm graph}\, f_i\,.
    \end{equation}
    Moreover, there are $b_i,c_i\in\R$ and $d_i\in \R^{n-1}$ for each $i=1,...,N$ such that
    $$
    f_i(y)=\begin{cases}
        b_i+c_i\log|y| +d_i\cdot\frac{y}{|y|^2} +O(|y|^{-1-\beta}), &\mbox{if}\ \  n=3,\\
        b_i+c_i\frac{1}{|y|^{n-3}} +d_i\cdot\frac{y}{|y|^{n-1}}+O(|y|^{2-n-\beta}), &\mbox{if}\ \  4\leq n\leq 7.
    \end{cases}
    $$
\end{theorem}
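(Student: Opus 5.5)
Outside a compact set $\Sigma$ is stable, so blow-downs are stable cones, hence planes for $3\le n\le 7$. Theorem~\ref{thm:improvminimalH} then upgrades the qualitative flatness to a quantitative decay of the annular height, which yields the graph structure. The asymptotic expansion comes from linearizing at infinity.

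\textbf{Step 1: stability and curvature decay.} Finite Morse index gives $R_1$ such that $\Sigma\setminus B_{R_1}$ is stable; this is the standard argument of Fischer-Colbrie. Stability, the area bound ${\rm Area}(\Sigma\cap B_R)\le CR^{n-1}$ and $n\le 7$ give the Schoen--Simon--Yau / Schoen--Simon curvature estimates on balls $B_{|x|/2}(x)\subset\R^n\setminus B_{R_1}$. Hence $|{\rm II}_\Sigma(x)|\le C_0/|x|$ for $|x|\ge 2R_1$.

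\textbf{Step 2: qualitative flatness of annuli.} Take any blow-down $\Sigma/\rho_k$. By the area bound and Step~1 it converges smoothly, with multiplicity, on compact subsets of $\R^n\setminus\{0\}$ to a stationary cone that is stable away from the origin and smooth away from $0$. The stable-cone classification (Simons, $n\le 7$) forces this cone to be a hyperplane with multiplicity $N$, and $N$ is bounded by the density. Consequently, for $r$ large, each annulus $\Sigma\cap(B_{2r}\setminus\overline{B_{r/2}})$ splits into $N$ sheets, each a graph of small slope over some hyperplane $e_r^\perp$. I apply Theorem~\ref{thm:improvminimalH} to each sheet $\Sigma^i$ separately, so that its connectedness hypothesis holds. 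The sheets are well defined across scales, since graph sheets glue continuously between overlapping annuli.

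For the flatness hypothesis I need $H_0(\Sigma^i,r)\le\eta r$ for all $r$ in the whole range $[2R_2,R/2]$, for arbitrarily large $R$. This follows from compactness: the convergence to a plane holds uniformly over all scales $r\ge R_2(\eta)$. Otherwise a sequence of bad scales would produce a blow-down which is not a plane through $0$, a contradiction. The centering $b=0$ is what the blow-down gives.

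\textbf{Step 3: decay and uniqueness of the tangent plane at infinity.} Rescale by $R_2$ and apply Theorem~\ref{thm:improvminimalH}, then let $R\to\infty$. The term $(r/R)^\alpha$ vanishes, leaving
\[
H(\Sigma^i,r)\le C\eta\, r^{4-n-\alpha}R_2^{n-3+\alpha}.
\]
For $n\ge 4$ this decays in $r$. For $n=3$ it gives $H\lesssim r^{1-\alpha}$, which still yields decay of the oscillation of the normal by a factor $r^{-\alpha}$.

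The optimal directions $e_r$ in successive dyadic annuli differ by $O(H/r)$, which is summable, so $e_r\to e$. All sheets share the same limit direction $e$, since they are disjoint and lie at sublinear distance from one another. Summing the dyadic errors shows each sheet is a graph $f_i$ over $e^\perp$ outside a compact set. The gradient satisfies $|\nabla f_i|\lesssim |y|^{-\alpha}$ for $n=3$, and $|f_i-b_i|\to$ a bounded decaying profile for $n\ge4$. Embeddedness orders the sheets, $f_1<\dots<f_N$.

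\textbf{Step 4: asymptotics.} Now $f_i$ solves the minimal surface equation on an exterior domain with small gradient. Write it as $\Delta f_i=\operatorname{div}(F(\nabla f_i))$, where $|F(p)|\lesssim|p|^3$. The right-hand side decays fast, so expand $f_i$ in exterior harmonics of $\R^{n-1}$: constant, then fundamental solution, then dipole. Remainders are controlled by weighted Schauder estimates on dyadic annuli, improved iteratively. This bootstrap gives the stated expansion with error $O(|y|^{2-n-\beta})$, or $O(|y|^{-1-\beta})$ when $n=3$. Linear growth modes are excluded by the decay from Step~3.

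I expect the main obstacle to be Step~2. One must verify the hypothesis of Theorem~\ref{thm:improvminimalH} over an unbounded range of scales with a single centered plane at each scale. The multiple sheets must also be handled: tracking them coherently across scales, and using disjointness to keep a common direction. The $n=3$ logarithmic case in Step~4 needs additional care.
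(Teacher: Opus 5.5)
Your proposal is correct and follows essentially the same route as the paper. The qualitative flatness and splitting into connected ends (the paper's Lemma~\ref{lem:TyskHypotheses}) come from stability outside a ball, Schoen--Simon curvature decay, and Simons' classification of blow-downs; applying Theorem~\ref{thm:improvminimalH} to each end with $R\to\infty$ gives graphicality and decay (Corollary~\ref{cor:Rtoinftyminimal}); and a common limiting direction is forced because all ends lie in one thin slab and are disjoint. The only difference is cosmetic: for the refined asymptotics the paper uses a Kelvin transform and Newton-potential $C^{1,\bar\alpha}$ regularity at the origin, choosing $\alpha$ close to $1$ so the cubic error decays fast enough, instead of your exterior-harmonic expansion with weighted Schauder bootstrapping, and the two amount to the same argument.
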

Some remarks are in order.
\begin{remark}\label{rmk:finindcond}
    The finite index condition is equivalent to stability away from some ball $B_M$. In fact, fixing $C$ and an outer stability radius $M$ in Theorem~\ref{thm:TyskIndex}, carefully tracing our proof would show that $N$, $R_0$ and the $b_i,c_i,d_i,y_i$ satisfy uniform bounds depending exclusively on $C,M$. Likewise, given $\beta\in(0,1)$, the $O(|y|^{2-n-\beta})$ error term has an implicit constant depending exclusively on $C,M,\beta$.
\end{remark}

    \begin{remark}
        It is worth highlighting the differences between our approach and the original one in \cite{Tysk89, Schoen83}. In both cases, the first step is to obtain a large-scale flatness condition (with respect to some directions $e_R$, possibly rotating as the scale $R$ goes to infinity) by a blow-down argument, using that---by the finite index condition---$\Sigma$ is stable away from some ball. From that point onward the arguments diverge:
        \begin{itemize}
            \item In \cite{Tysk89}, the the global graphicality and uniqueness of tangent cone for the large-scale components of $\Sigma$ are obtained directly, by using the finite index condition once again\footnote{Essentially, \cite{Tysk89} argues that if the normal vector oscillated between $e_1\neq e_2$, the Jacobi field $\nu_\Sigma\cdot (e_1-e_2)$ would have infinitely many compact nodal domains, contradicting finiteness of index.}. The asymptotics for the resulting graphs (which have then bounded slope) can then be obtained via Green kernel estimates as in \cite{Schoen83}.
            \item In our approach, which does not use the finiteness of index again, we apply the improvement-of-flatness result in Theorem~\ref{thm:improvminimalH} to the ends (i.e. large-scale components) of $\Sigma$. Letting $R\to\infty$, their graphicality and strong flatness follow. The refined asymptotics can be directly obtained from this.
        \end{itemize}
            Our PDE/regularity approach serves as a robust and flexible replacement for the argument in \cite{Tysk89}, suitable for a broad class of variational problems, as well as for dealing with general critical points or local settings.
    \end{remark}
\begin{remark}
    The area growth condition has long been expected to be redundant, and essentially equivalent to the resolution of the stable Bernstein problem on the rigidity of (complete, smoothly immersed, two-sided) stable minimal hypersurfaces in $\R^n$ for $3\leq n \leq 7$. They have both been confirmed for $3\leq n \leq 6$ via the works \cite{FS80, FC85, CL24, CLMS25, Maz24}.
\end{remark}

\section{Improvement of flatness in annuli}
\subsection{Preliminaries}
Consider the following standard consequence of curvature estimates:
\begin{lemma}\label{lem:rjbagbda}
    Let $\Sigma$ be an embedded smooth hypersurface in $B_1\subset \R^n$, with $(\overline{\Sigma}\setminus \Sigma)\cap \overline{B_1}\subset \partial B_1$, and assume that  $|\mathrm{II}_\Sigma(p)|\leq C_0$ for all $p\in \Sigma\cap B_1$, for some $C_0> 0$. There are $\delta=\delta(C_0,n)\in(0,1/2)$ and $C=C(n,C_0)$ such that the following holds:
    
    For every $p\in \Sigma\cap B_{1/2}$, up to a rotation mapping the normal to $\Sigma$ at $p$ to $e_n$ and a translation mapping $p$ to $0$, the following holds. Let $\Gamma$ denote the connected component of \[\Sigma\cap \{x=(x',x_n) \in B_{1} \ :  |x'|<\delta\}\]
    containing $p$. Then,
    \begin{equation*}
        \Gamma={\rm graph} \, g, \quad \mbox{where}\quad  g(0)=0,\quad|\nabla g|\leq \tfrac{1}{8}, \quad\mbox{and} \quad |D^2 g|\leq C \,.
    \end{equation*}
\end{lemma}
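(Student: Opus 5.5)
We prove the lemma by a continuity argument along a graphical patch: a curvature bound controls how fast the normal can rotate, and embeddedness together with the completeness condition lets us extend the graph over the whole small disc. After the rotation and translation, $p=0$ and $\nu_\Sigma(0)=e_n$. We have $|p|<1/2$, so $B_{1/2}(p)\subset B_1$. Hence every point of $\Sigma$ at distance less than $1/2$ from $p$ is an interior point of $\Sigma$, where $|\mathrm{II}_\Sigma|\le C_0$.

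\textbf{Step 1: the unit normal varies slowly.} Fix a connected piece $\Gamma'\ni 0$ of $\Sigma$ inside $\{|x'|<\delta\}$ that is a graph over a domain containing $0$. Along any curve $\gamma\subset\Sigma$ parametrised by arclength we have $|\tfrac{d}{dt}\nu(\gamma(t))|\le |\mathrm{II}_\Sigma|\le C_0$. Suppose $|\nu-e_n|$ stays small along $\gamma$, say $\nu\cdot e_n\ge 1/\sqrt{1+1/64}$. Then the intrinsic length of $\gamma$ is comparable to the length of its projection to $\R^{n-1}$. So if the projection stays in $B'_\delta$, we may join any point to $0$ by a curve of length at most $2\delta$. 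Choosing $\delta\le c/C_0$ with $c$ small then gives $|\nu-e_n|\le 2C_0\delta$. This is smaller than the threshold corresponding to slope $1/8$, which strictly improves the a priori assumption.

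\textbf{Step 2: continuity argument to extend the graph.} Let $\rho^*$ be the supremum of $\rho\le\delta$ such that the component of $\Sigma\cap\{|x'|<\rho\}\cap B_{1/2}(0)$ through $0$ is a graph of a function $g$ over $B'_\rho$ with $|\nabla g|\le 1/8$.
\begin{itemize}
\item By the implicit function theorem, $\rho^*>0$.
\item Any such $g$ satisfies $|g|\le \rho/8\le \delta/8$. So the graph stays well inside $B_{1/2}(0)$ (we assume $\delta<1/4$), away from $\partial B_1$ and hence from $\overline\Sigma\setminus\Sigma$.
\item Step 1 gives $|\nabla g|\le 1/16$ on $B'_\rho$.
\end{itemize}
To reach $\rho^*=\delta$, take a limit point of the graph over $\partial B'_{\rho}$. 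By completeness it lies in $\Sigma$, and it is an interior point. Applying the implicit function theorem there, with uniform size coming from the curvature bound, extends $g$ smoothly past $\rho$ with the slope still below $1/8$. This contradicts maximality unless $\rho^*=\delta$.

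\textbf{Step 3: identify $\Gamma$ and bound $D^2g$.} We must check that the component $\Gamma$ of $\Sigma\cap\{x\in B_1:|x'|<\delta\}$ containing $p$ is exactly ${\rm graph}\,g$. The graph is relatively open in $\Sigma$, being a union of local graphs. It is also relatively closed in $\Sigma\cap\{|x'|<\delta\}$: its limit points in the cylinder lie over $B'_\delta$, and by Step 2 they are attained by the graph. By connectedness, $\Gamma={\rm graph}\,g$. Finally, with $|\nabla g|\le 1/8$, the second fundamental form of a graph is comparable to $D^2g$, namely $|D^2 g|\le (1+|\nabla g|^2)^{3/2}|\mathrm{II}|\le C(n)C_0$.

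The main obstacle is the closedness part of the continuity argument in Step 2. There we must rule out the graph escaping the region where the curvature bound holds, or running into the boundary. The constraint $|x'|<\delta$ combined with the slope bound keeps the graph in $B_{1/2}(p)$, and this is what makes the argument work. We will choose $\delta=\min(1/8,\,c/C_0)$ with $c$ a small absolute constant.
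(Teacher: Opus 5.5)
Your proposal is correct. It is the standard continuity (path-lifting) proof of the uniform graph lemma; the paper does not reprove that lemma but cites it from Korevaar--Kusner--Solomon and P\'erez's notes, where essentially this argument appears. One point in Step 2 should be tightened: invoking a ``uniform size'' for the implicit function theorem is slightly circular and unnecessary, since compactness of $\partial B'_{\rho^*}$ and embeddedness (which forces the local sheet at each boundary limit point to coincide with ${\rm graph}\,g$ and the local extensions to agree on overlaps) already extend $g$ to $B'_{\rho^*+\epsilon}$.
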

\begin{proof}
This is the standard uniform graph lemma; see \cite[(4.4)]{KorevaarKusnerSolomon1989} or   \cite[Lemma 12.4]{PerezNotesMinimalCMCSurfaces}.
\end{proof}

We will need the following refinement for hypersurfaces which are contained in a thin slab within an annulus:
 
\begin{lemma}\label{lem:mf3rt7qwg0tba}
    Let $\Sigma$ be an embedded smooth hypersurface in $B_{20}\setminus \overline{B_{10}}$ with $(\overline{ \Sigma}\setminus \Sigma) \cap (B_{20}\setminus \overline{B_{10}})=\emptyset$ and
    $$|\mathrm{II}_{\Sigma}|\leq C_0\quad\mbox{in}\quad B_{20}\setminus \overline{B_{10}}.$$
   There are $\ep_0=\ep_0(C_0,n) >0$ and $C=C(C_0,n)$ such that: If 
   \begin{equation}\label{hwiohthiow1}
         \Sigma\cap (B_{20}\setminus \overline{B_{10}})\subset \{|x_n|\leq \ep\}
    \end{equation}
    for some $\ep \le \ep_0$, then there exist $N\in\N$ and smooth $f_i:B_{19}'\setminus \overline{B_{11}'}\to [-\ep,\ep]$ such that
    $$\Sigma \cap  \left((B_{19}'\setminus \overline{B_{11}'})\times[-5,5]\right)=\bigcup_{i=1}^N {\rm graph}\, f_i,$$
    with $ |D^2 f_i|\leq C$. In addition, $\Sigma\cap (B_{18}\setminus \overline{B_{12}})$ is connected precisely if $N=1$.
\end{lemma}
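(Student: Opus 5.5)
The plan is to apply Lemma~\ref{lem:rjbagbda} at every point of $\Sigma$ in the annulus, at a fixed scale $\rho=\rho(C_0,n)$, and then use the slab condition \eqref{hwiohthiow1} to show the resulting local graphs are nearly horizontal. Graphicality then follows by an accounting of sheets over each vertical line.

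\textbf{Step 1: horizontality.} Let $p\in\Sigma\cap(B_{19.5}\setminus\overline{B_{10.5}})$. Rescale Lemma~\ref{lem:rjbagbda} to a ball $B_{1/2}(p)\subset B_{20}\setminus\overline{B_{10}}$, where the curvature bound becomes $C_0/2$. In coordinates adapted to the normal $\nu(p)$, near $p$ the surface is a graph of $g$ over a disc of radius $\delta/2$, with $|\nabla g|\le 1/8$ and $|D^2 g|\le C$. Suppose $|\nu(p)\cdot e_n|\le 1-\theta$ for a fixed small $\theta$. Then the tangent plane contains a unit vector $\tau$ with $|\tau\cdot e_n|\ge c(\theta)>0$. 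Moving a distance $\delta/4$ along the graph in direction $\tau$ changes $x_n$ by at least $c\delta/4-C\delta^2$, up to the slope error. For $\theta$ fixed this is bounded below by a fixed amount, which is larger than $2\ep_0$, contradicting \eqref{hwiohthiow1}. Hence $\nu\cdot e_n$ is close to $\pm1$ everywhere. Taylor's formula together with $|x_n|\le\ep$ refines this to $|\nu-(\pm e_n)|\le C\sqrt{\ep}$. Consequently each local graph can be rewritten as a graph over the horizontal plane, with slope $\le C\sqrt\ep$ and $|D^2|\le C$.

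\textbf{Step 2: global graphs.} Take the projection $\pi:\Sigma\cap\big((B_{19}'\setminus\overline{B_{11}'})\times[-5,5]\big)\to B_{19}'\setminus\overline{B'_{11}}$. Points with $|x'|\in(11,19)$ and $|x_n|\le\ep$ lie in $B_{19.5}\setminus\overline{B_{10.5}}$, so by Step 1 the map $\pi$ is a local diffeomorphism. It is also proper, because $\Sigma$ has no boundary inside the open annulus and stays inside the slab. By embeddedness and the uniform local graph size, the number of preimages is finite and uniformly bounded: distinct sheets over the same point are disjoint graphs of size $\delta$, and the slab has finite height, though a packing count is needed to make this precise. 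A proper local diffeomorphism is therefore a finite covering.

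Ordering the preimages by height then gives $N$ continuous sections $f_1<\dots<f_N$. They are ordered because sheets cannot cross, by embeddedness. Smoothness and the bound $|D^2f_i|\le C$ follow from Step 1, and $|f_i|\le\ep$ is just \eqref{hwiohthiow1}. The fact that the base $B_{19}'\setminus\overline{B_{11}'}$ is not simply connected when $n=3$ causes no issue, since ordering by height canonically trivializes the covering.

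\textbf{Step 3: connectedness.} Each graph over the annulus $B_{19}'\setminus\overline{B_{11}'}$, restricted to $B_{18}\setminus\overline{B_{12}}$, is connected. This holds because the restriction is essentially a graph over $\{12\lesssim|x'|\lesssim 18\}$, up to $O(\ep)$ perturbation of the boundary, and that region is connected for $n\ge3$. So if $N=1$, the set $\Sigma\cap(B_{18}\setminus\overline{B_{12}})$ is connected. Conversely, if $N\ge2$, the sets $\{x_n<$ a separating function$\}$ split $\Sigma\cap(B_{18}\setminus\overline{B_{12}})$ into disjoint relatively open pieces. Here one must check that $\Sigma\cap(B_{18}\setminus \overline{B_{12}})$ lies inside the cylinder region where the graph description holds, which is clear since $|x_n|\le\ep$ gives $|x'|\in(12-\ep,18)$.

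I expect the main obstacle to be Step 1: choosing the constants so that the slab thickness $\ep_0$ is small compared with $\delta^2/C$, forcing the normals to be vertical. The covering and finiteness argument of Step 2 is routine by comparison.
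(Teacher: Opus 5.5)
Your proposal is correct and follows essentially the same route as the paper: uniform local graphs from Lemma~\ref{lem:rjbagbda}, the slab condition forcing $e_n$ to be a direction of graphicality, patching into ordered graphs via closedness and embeddedness, and connectedness via \eqref{eq:slab2}; your covering-space formulation and the explicit tilt bound $|\nu\mp e_n|\le C\sqrt{\ep}$ make the paper's sketch more precise. One correction: derive finiteness of the fibers from properness (a compact discrete fiber is finite, which is the paper's ``vertical accumulation point'' argument), not from a packing count, which cannot work because arbitrarily many parallel hyperplanes $\{x_n=c_i\}$ with $|c_i|\le\ep$ satisfy all the hypotheses, so $N$ admits no bound in terms of $C_0$ and $n$.
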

\begin{remark}
From \eqref{hwiohthiow1} and $19^2+5^2<20^2$ we directly get
\begin{equation}\label{eq:slab1}
    \Sigma \cap  \big( (B_{19}'\setminus \overline{B_{11}'}) \times [-5,5]\big)\subset \Sigma \cap (B_{20}\setminus \overline{B_{10}})\subset B_{20}'\times[-\ep,\ep].
\end{equation}
Furthermore, assuming $\ep_0 < 1$,
\begin{equation}\label{eq:slab2}
    \Sigma \cap (B_{18}\setminus \overline{B_{12}})\subset (B_{19}'\setminus \overline{B_{11}'})\times[-\ep,\ep].
\end{equation}
We check \eqref{eq:slab2}: If $x = (x', x_n) \in \Sigma\cap (B_{18}\setminus \overline{B_{12}})$ then $|x|\geq 12$, and  $|x_n|\leq \ep_0< 1$ by assumption. Hence, $|x'|=\sqrt{|x|^2 - |x_n|^2}\geq \sqrt{12^2-1^2}>11\,.$
\end{remark}
\begin{proof}[Proof of Lemma~\ref{lem:mf3rt7qwg0tba}]
By Lemma~\ref{lem:rjbagbda}---appropriately rescaled---and the bound on the second fundamental form, there is $\delta >0$ (depending only on $n$ and $C_0$) such that: for any $p\in \Sigma \cap  \big( (B_{19}'\setminus \overline{B_{11}'}) \times [-5,5]\big)$, the connected component of $\Sigma$ in a small cylinder (of size $\delta$), centered at $p$, is a flat Lipschitz graph (in some direction, a priori depending on $p$). 
By \eqref{eq:slab1}, taking $\ep_0$ small enough (depending on the previous $\delta$), we see that $e_n$ is also a direction of graphicality (although the Lipschitz bound may change to, say, $\tfrac{1}{2}$ instead of $\tfrac{1}{8}$).

Since $\Sigma \cap  \big( (B_{19}'\setminus \overline{B_{11}'}) \times [-5,5]\big)$ is covered by such flat Lipschitz graphs (in the $e_n$ direction) and the radius of the $(n-1)$-balls of where these graphs are defined is bounded by below by $\delta/2$, we see that $\Sigma \cap  \big( (B_{19}'\setminus \overline{B_{11}'}) \times [-5,5]\big)$ decomposes into a union of horizontal graphs over 
$B_{19}'\setminus \overline{B_{11}'}$. The embeddedness of $\Sigma$ and the closedness hypothesis $(\overline{ \Sigma}\setminus \Sigma) \cap (B_{20}\setminus \overline{B_{10}})=\emptyset$ show that the graphs are ordered, and that there are finitely many of them (as, otherwise, by closedness we would have a vertical accumulation point belonging to $\Sigma$, but then this would contradict embeddedness)

In other words, there exist $N\in \N$ and $f_i: B_{19}'\setminus \overline{B_{11}'}\to [-\ep,\ep]$ such that
    $$\Sigma\cap \left((B_{19}'\setminus \overline{B_{11}'})\times [-5,5] \right)=\bigcup_{i=1}^{N}{\rm graph}\,f_i\,$$

    Finally, the fact that $\Sigma\cap (B_{18}\setminus \overline{B_{12}})$ is connected precisely if $N=1$ follows then from \eqref{eq:slab2} plus elementary considerations, up to making $\ep_0$ (and thus $\delta$) small enough.
\end{proof}
We will also need the following ``compactness lemma'':
\begin{lemma}
\label{lem:compactnessminimal}
    Let $n \ge 3$ and $\alpha \in (0, 1)$. For any $\ep > 0$ there exists $\delta=\delta(n,\alpha,\ep)> 0$ such that the following holds. 
    
    Define $\psi(t): = \max(t^{4-n-\alpha}, t^{1+\alpha})$. Let $v\in C^{\infty}(B_{1/\delta}'\setminus \overline{B_\delta'})$ satisfy
    $$
    {\rm Tr}(A(y) D^2 v)=0 \quad \text{in}\quad B_{1/\delta}'\setminus \overline{B_\delta'},
    $$
    where 
    \[
    \sup_{y\in B_{1/\delta}'\setminus \overline{B_\delta'}}\|A(y) - {\rm Id}\|_2 \le \delta \qquad \text{and}\qquad |v(y)| \le \psi(|y|) \quad \text{for}\quad y\in B_{1/\delta}'\setminus \overline{B_\delta'}.
    \]
    Then, there are $a\in \R^{n-1}$ and $b\in \R$ such that 
    \[
    |v(y) - a\cdot y - b|\le \ep\qquad\text{in}\quad B_4'\setminus \overline{B_{1/4}'},
    \]
    where $|a|+|b|\le C$ for some $C=C(n)$. 
\end{lemma}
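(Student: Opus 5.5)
We argue by contradiction and compactness. Suppose the statement fails for some $\ep>0$. Then there are $\delta_k\to 0$, matrices $A_k$ with $\|A_k-{\rm Id}\|_2\le\delta_k$ on $B'_{1/\delta_k}\setminus\overline{B'_{\delta_k}}$, and smooth solutions $v_k$ of ${\rm Tr}(A_kD^2v_k)=0$ there with $|v_k|\le\psi(|y|)$. For each $k$, no choice of $a,b$ with $|a|+|b|\le C(n)$ gives $|v_k-a\cdot y-b|\le\ep$ on $B_4'\setminus\overline{B_{1/4}'}$. The constant $C(n)$ will be fixed a posteriori from the limit.

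\textbf{Step 1: local compactness.} On every compact subset $K\Subset\R^{n-1}\setminus\{0\}$ the functions $v_k$ are uniformly bounded, by $\sup_K\psi$. Since $A_k$ is uniformly elliptic with ellipticity constants close to $1$ but carries no regularity assumption, the right tool is Krylov--Safonov. It gives a uniform $C^{0,\gamma}$ estimate on compact subsets of the punctured space. By Arzel\`a--Ascoli we extract a subsequence converging locally uniformly in $\R^{n-1}\setminus\{0\}$ to some $v$ with $|v(y)|\le\psi(|y|)$.

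\textbf{Step 2: the limit is harmonic.} Since $A_k\to{\rm Id}$ uniformly, stability of viscosity solutions under uniform convergence shows that $v$ is a viscosity, hence classical, harmonic function in $\R^{n-1}\setminus\{0\}$.

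\textbf{Step 3: Liouville-type classification.} Expand $v$ in spherical harmonics: $v=\sum_\ell (a_\ell r^\ell+b_\ell r^{-(\ell+n-3)})\phi_\ell(\theta)$ for $n\ge4$, with the usual logarithm in the radial mode when $n=3$.
\begin{itemize}
\item As $r\to\infty$, the bound $|v|\le r^{1+\alpha}$ with $\alpha<1$ kills every growing mode $r^\ell$ with $\ell\ge2$.
\item As $r\to0$, the bound $|v|\le r^{4-n-\alpha}$ kills every decaying mode of homogeneity $-(\ell+n-3)<4-n-\alpha$, that is, with $\ell+1>\alpha$. This is every $\ell\ge0$, including the fundamental solution (homogeneity $3-n$) and, for $n=3$, the $\log r$ term.
\end{itemize}
The surviving modes are the linear and constant ones, $v=a\cdot y+b$. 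Testing the bound at $|y|=1$, where $\psi=1$, gives $|a|+|b|\le C(n)$.

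\textbf{Step 4: contradiction.} Uniform convergence on $\overline{B_4'}\setminus B_{1/4}'$ gives $|v_k-a\cdot y-b|\le\ep$ there for $k$ large. This contradicts the choice of $v_k$.

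\textbf{Main obstacle.} The key step is the interior compactness in Step 1, because the coefficients are only measurable or continuous with small oscillation. I would invoke Krylov--Safonov Hölder estimates for nondivergence equations. If $A$ were smooth one could use Schauder estimates instead, but the statement should not rely on this.

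A secondary care point is the radial mode in Step 3, which must be excluded in both dimension regimes. For $n=3$, both $\log r$ and constants are compatible with $r^{1-\alpha}$ near $0$? Here $4-n-\alpha=1-\alpha>0$, so the bound forces $v\to0$ at the origin. This removes $\log r$ but, taken alone, would also force $b=0$. That is acceptable, since the conclusion only requires some $b$.
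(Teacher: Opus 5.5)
Your argument is correct and is essentially the paper's contradiction--compactness proof. The only differences are in the tools: you get compactness from Krylov--Safonov H\"older bounds plus stability of viscosity solutions, whereas the paper uses Cordes--Nirenberg $C^{1,\alpha}$ bounds, which exploit $\|A-{\rm Id}\|_2\le\delta$ directly and give $C^1$ convergence; and you classify the harmonic limit by a spherical-harmonic expansion rather than by a removable-singularity argument followed by Liouville. One small logical fix: the constant $C$ has to be fixed before you negate the statement, not a posteriori. This is harmless, since you can take $C=2$ from the outset: any affine $a\cdot y+b$ with $|a\cdot y+b|\le 1$ on $\{|y|=1\}$ has $|a|,|b|\le 1$.
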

\begin{proof}
    Assume for contradiction that the statement does not hold. Then, there are $\ep_\circ > 0$, and a sequence $v_k$ as in the statement with $\delta = 1/k$ and $k\to\infty$, such that the thesis fails for $\ep = \ep_\circ$. 

    By standard Cordes--Nirenberg estimates \cite{Cordes1956, Nirenberg1954}, the $v_k$ satisfy $C_{loc}^{1,\alpha}$ bounds in $\R^{n-1}\setminus\{0\}$. By Arzel\`a-Ascoli, up to passing to a subsequence $v_k$ converges locally in $C^1$ to some $v_\infty$, which is then a viscosity (thus strong) solution to
    \[
    \Delta v_\infty=0\quad\mbox{in}\quad\R^{n-1}\setminus\{0\}\,.
    \]
    Additionally,
    \[
    |v_\infty(y)| \le \psi(|y|)=\max(|y|^{4-n-\alpha}, |y|^{1+\alpha}).
    \]
    In particular, this implies that $v_\infty=o(|y|^{2-(n-1)})$ as $|y|\to 0$, so that the singularity at the origin is removable by a standard barrier argument. Then, using that $v_\infty=o(|y|^2)$ as $|y|\to\infty$, the usual Liouville theorem for entire harmonic functions gives that $v_\infty$ is affine, yielding a contradiction for $k$ large enough.
\end{proof}
\subsection{Proof of Theorem~\ref{thm:improvminimalH}}
The proof of Theorem~\ref{thm:improvminimalH} will make use of the following ``iteration step'':
\begin{lemma}
\label{lem:Hu1minimal}
    Let  $n\ge3$, $\alpha\in (0,1)$, and $C_0,\ep>0$. There exist $\delta=\delta(n,\alpha,C_0,\ep)>0$ and $\eta_0=\eta_0(n,\alpha,C_0,\ep)>0$ such that the following holds.

    Let $\Sigma$ be a minimal hypersurface 
    in $D = B_{1/\delta}\setminus \overline{B_\delta} \subset \R^n$, satisfying
    \[
      |{\rm II}_\Sigma|\le C_0\quad  \mbox{in}\quad  D,
  \]
  and assume that $\Sigma\cap(B_{2r}\setminus \overline{B_{r/2}})$  is connected for every $r\in(2\delta,1/(2\delta))$.
  
    Set $\psi(t): = \max(t^{4-n-\alpha}, t^{1+\alpha})$. Assume that there are $|b|<\eta_0$, $e\in\Sph^{n-1}$, and $0< \eta\le \eta_0$, such that
    \begin{equation}\label{growth00minimal}
        H_b(\Sigma, e, t) \le \eta\, \psi(t) \qquad \text{for all} \quad  2\delta <t<\tfrac 1{2\delta}.
    \end{equation}
      
    Then, we have
    \[
        H(\Sigma,1) \le \varepsilon  \eta.
    \]
\end{lemma}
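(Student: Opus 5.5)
We argue by compactness and contradiction, reducing the statement to Lemma~\ref{lem:compactnessminimal}. Fix $\ep>0$ and suppose the claim fails. Then there are sequences $\delta_k\to0$, $\eta_k\to0$ (we may take $\eta_0=\eta_{0,k}\to 0$, so also $|b_k|<\eta_k$-type smallness), minimal hypersurfaces $\Sigma_k$ in $B_{1/\delta_k}\setminus\overline{B_{\delta_k}}$ with $|{\rm II}_{\Sigma_k}|\le C_0$ and connected annular pieces, directions $e_k$ and shifts $b_k$ with $H_{b_k}(\Sigma_k,e_k,t)\le \eta_k\psi(t)$ for $2\delta_k<t<1/(2\delta_k)$, but $H(\Sigma_k,1)>\ep\eta_k$. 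After a rotation, $e_k=e_n$.

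\textbf{Step 1: graphical decomposition.} At each scale $t$ the surface lies in the slab $\{|x_n-b_k|\le \eta_k\psi(t)\}$ within $B_{2t}\setminus\overline{B_{t/2}}$. Rescaling by $10/t$ and using the curvature bound $|{\rm II}|\le C_0$, which improves under rescaling for $t\ge 1$ and is at worst $C_0 t$ for small $t$, we apply Lemma~\ref{lem:mf3rt7qwg0tba}. The slab width after rescaling is $\eta_k\psi(t)/t+|b_k|/t$, which is small uniformly on any fixed range $t\in[\tau,1/\tau]$ as $k\to\infty$. Connectedness forces $N=1$. Hence, on every compact annulus $B'_{1/\tau}\setminus \overline{B'_\tau}$ and for $k$ large, $\Sigma_k$ is the graph of a single function $f_k$ with $|f_k(y)-b_k|\le C\eta_k\psi(|y|)$ and $|\nabla f_k|,|D^2f_k|$ small. (The graphs over overlapping annuli glue by uniqueness.)

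\textbf{Step 2: normalize and pass to the limit.} Set $v_k:=(f_k-b_k)/\eta_k$. Then $|v_k|\le C\psi$, with $C$ absorbing the change between $|x|$ and $|y|$ for flat graphs. Moreover $v_k$ solves the linear equation ${\rm Tr}(A_k D^2 v_k)=0$ with
\[
A_k=\mathrm{Id}-\frac{\nabla f_k\otimes\nabla f_k}{1+|\nabla f_k|^2},
\]
which is the minimal surface equation in non-divergence form. Here $\|A_k-\mathrm{Id}\|\le|\nabla f_k|^2\to0$ locally, since interior estimates on each rescaled annulus give $|\nabla f_k|\lesssim \eta_k\psi(t)/t+|b_k|/t\to0$. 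Lemma~\ref{lem:compactnessminimal}, applied with $\ep'=\ep/4C$ on the domain where $k$ is large, yields $a_k,b'_k$ with $|a_k|+|b'_k|\le C$ and $|v_k-a_k\cdot y-b'_k|\le \ep/4$ on $B_4'\setminus\overline{B'_{1/4}}$.

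\textbf{Step 3: convert to tilted height.} On $B_2\setminus \overline{B_{1/2}}$ we now have
\[
|x_n-b_k-\eta_k(a_k\cdot x'+b'_k)|\le \ep\eta_k/4 .
\]
Take $e=(e_n-\eta_k a_k)/|e_n-\eta_k a_k|$ and a suitable shift. Since $|e_n-\eta_ka_k|=1+O(\eta_k^2)$, we get $H(\Sigma_k,1)\le \ep\eta_k/4+C\eta_k^2<\ep\eta_k$ for $k$ large, a contradiction.

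The main obstacle is Step 1. There we need uniform single-graph control on annuli of all sizes in a compact range, from a height bound that degenerates like $\psi(t)$ near $t=0$. We also need the curvature and Hessian bounds to be good enough that $A_k\to\mathrm{Id}$ and that the $x_n$-versus-$|y|$ distortion is harmless. I would handle this by working only on $[\tau,1/\tau]$ for $\tau$ fixed, and choosing $k$ large depending on $\tau$. A diagonal argument, together with the fact that Lemma~\ref{lem:compactnessminimal} only requires the hypotheses on $B'_{1/\delta}\setminus \overline{B'_\delta}$ for its own $\delta$, then closes the argument.
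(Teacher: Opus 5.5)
Your proposal is correct and is essentially the paper's proof: graphicality from Lemma~\ref{lem:mf3rt7qwg0tba} plus connectedness, the normalized graph $(f-b)/\eta$ fed into Lemma~\ref{lem:compactnessminimal}, and conversion of the affine approximant into a tilted plane. The only differences are that the paper argues directly (fixing $\delta$ from Lemma~\ref{lem:compactnessminimal}, then $\eta_0$ depending on $\delta$, so your contradiction/diagonal wrapper is unnecessary) and translates $b$ away rather than subtracting it; also, a small slip: rescaling by $10/t$ multiplies $|{\rm II}|$ by $t/10$, so the curvature bound worsens for large $t$, not small $t$, which is harmless since you only work on a compact range of scales.
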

\begin{proof}
After a rotation, we can assume that $e=e_n$. Moreover, after a translation we can also assume that $b=0$, up to showing then that
$$H(\Sigma,r) \le \varepsilon  \eta\quad\mbox{for every}\quad r\in (1/2,2)$$
instead\footnote{Indeed, we can ensure that $|b|< 1/4$ by making $\eta_0$ small; moreover, given $x\in B_{1/4}$, we have $B_2\setminus \overline{B_{1/2}}\subset \bigcup_{r\in(1/2,2)}B_{2r}(x)\setminus \overline{B_{r/2}(x)}$.}.

\noindent {\bf Step 1.} Graphicality.

Let $\delta\in(0,1)$ to be chosen later. Set $\bar \ep:=\max_{2\delta<t<\frac{1}{2\delta}} \eta\psi(t)$, so that
$$H_0(\Sigma, e_n, t) \le \eta\psi(t)\leq \bar\ep \qquad \text{for all} \quad  2\delta <t<\tfrac 1{2\delta}.$$
Up to making $\eta_0$ small enough (depending only on $n,\alpha,C_0,\delta$), which in turn makes $\bar\ep$ as small as desired, we can apply Lemma \ref{lem:mf3rt7qwg0tba} and equation \eqref{eq:slab2} (appropriately rescaled\footnote{It suffices to consider $\widetilde \Sigma_r:=\frac{10}{r}\left(\Sigma\cap (B_{2r}\setminus \overline{B_{r}})\right)$, which satisfies $|\mathrm{II}_{\widetilde\Sigma_r\cap (B_{20}\setminus \overline{B_{10}})}|\leq \frac{r}{10}C_0\leq \frac{1}{20\delta}{C_0}$ and $\widetilde \Sigma_r\cap (B_{20}\setminus \overline{B_{10}})\subset\{|x_n|\leq \frac{10}{r}\bar\ep\}\subset\{|x_n|\leq \frac{10}{\delta}\bar\ep\}$.}, recall \eqref{eq:scalingminimal}) to $\Sigma\cap (B_{2r}\setminus \overline{B_{r}})$, for every $r\in[\delta,\frac{1}{2\delta}]$. Piecing this information together, this gives graphicality in the $x_n$-direction: We find that
$$\Sigma \cap (B_{1/(4\delta)}\setminus \overline{B_{4\delta}})\subset (B_{1/(2\delta)}'\setminus \overline{B_{2\delta}'})\times[-\bar\ep,\bar\ep]\,,$$
    and there is some smooth $f:B_{1/(2\delta)}'\setminus \overline{B_{2\delta}'}\to [-\bar\ep,\bar\ep]$ such that
    $$ \Sigma \cap  \left((B_{1/(2\delta)}'\setminus \overline{B_{2\delta}'})\times[-\bar\ep,\bar\ep]\right)= {\rm graph}\, f\,.$$
    Moreover, we get that $|D^2 f|\leq C$, where $C=C(C_0,n,\delta)$.

Let $A(y)$ denote the $(n-1)\times (n-1)$ matrix with components
$$(A(y))_{ij}:=\delta_{ij}
-\frac{\partial_i f(y)\,\partial_j f(y)}
{1+|\nabla f|^2(y)},$$
chosen so that the minimal graph equation in nondivergence form becomes
\begin{equation}
    \label{eq:TrA_nondiv}
    {\rm Tr}(A(y)D^2 f)=0.
\end{equation}
Now, a simple interpolation ensures that $|\nabla f|$ can be made as small as wanted, up to making $\eta_0$ even smaller. In particular, if $\eta_0$ is small enough (depending, as before, only on $n,\alpha,C_0,\delta$), we can make sure that  
\[
    \sup_{y\in B_{1/(4\delta)}'\setminus \overline{B_{4\delta}'}}\|A(y) - {\rm Id}\|_2 \le \delta.
\]

\noindent {\bf Step 2.} Vertical rescaling and conclusion.

Consider $v:=\frac{f}{\eta}$, so that
$$|v(y)|\leq \psi(|y|)\quad\mbox{and}\quad{\rm Tr}(A(y)D^2 v)=0\quad\mbox{for}\quad y\in B_{1/(4\delta)}'\setminus \overline{B_{4\delta}'}\subset\R^{n-1}.$$
Fixing $\delta$ small depending on $n,\alpha,C_0,\ep$, which in turn fixes $\eta_0$ in terms of $n,\alpha,C_0,\ep$ too, we can then apply Lemma~\ref{lem:compactnessminimal} (with $4\delta$ in place of $\delta$) to $v$. This shows that
\[
    |v(y) - a\cdot y - b|\le \frac{\ep}{2}\qquad\text{in}\quad B_4'\setminus \overline{B_{1/4}'},
    \]
    or
    \[
    |f(y) - \widetilde a\cdot y - \widetilde b|\le \frac{\ep}{2}\eta \qquad\text{in}\quad B_4'\setminus \overline{B_{1/4}'},
    \]
    for some $\widetilde a\in\R^{n-1}$ and $\widetilde b\in\R$ with $|\widetilde a|+|\widetilde b|\le C\eta$.
Letting
$$e:=\frac{(-\widetilde a,1)}{\sqrt{1+|\widetilde a|^2}}\qquad\mbox{and}\qquad b:=\frac{-\widetilde b}{\sqrt{1+|\widetilde a|^2}},$$
which are chosen so that
$${\rm graph}(\widetilde a\cdot y+\widetilde b)=\{e\cdot x+b=0\},$$
we immediately see that $H_b(\Sigma,e,1)\leq \ep\eta$---up to possibly making $\delta$ and $\eta_0$ even smaller one last time.
\end{proof}

We are now ready for the proof of Theorem \ref{thm:improvminimalH}:
\begin{proof}[Proof of Theorem \ref{thm:improvminimalH}]
We argue by contradiction. Let $C_\star$ be a positive constant, to be chosen large enough (depending only on $n$ and $\alpha$). Put $\phi(t): = t(t^{3-n-\alpha} +(t/R)^{\alpha})$. Define
\[
Q_\star :=  \sup_{r\in [2,R/2]}  \frac{H(\Sigma,r)}{\phi(r)},
\]
and assume for contradiction that  
\[
Q_\star \ge C_\star \eta.
\]

By continuity, there exist $r_\star\in[2,R/2]$ and $b_\star\in \R$, $e_\star\in \mathbb{S}^{n-1}$ such that
\[
H(\Sigma,r_\star) = Q_\star \phi(r_\star) \qquad\mbox{and}\qquad H(\Sigma,r_\star)= H_{b_\star}(\Sigma,e_\star,r_\star).
\]
More generally, for every $r\in [2,R/2]$ there exist $b_r\in\R$ and $e_r\in\Sph^{n-1}$  such that
\begin{equation}\label{89grqbgpargh}
    H(\Sigma,r) \le  Q_\star \phi(r)=H(\Sigma,r_\star)\frac{\phi(r)}{\phi(r_\star)} \qquad\mbox{and}\qquad H(\Sigma,r)=H_{b_r}(\Sigma,e_r, r)\le\eta r,
\end{equation}
where we have additionally used assumption \eqref{flatnesshypothesisthmminimal} in the last inequality.

We proceed in several steps.

\noindent {\bf Step 1.} Rescaling.

Consider $\bar \Sigma := \frac{1}{r_*}\Sigma$. Denoting
$$t = r/r_\star\in (2/r_\star, R/(2r_\star)),\quad \bar b_t = b_r/r_\star,\quad\mbox{and}\quad\bar e_t = e_r,$$ \eqref{89grqbgpargh} becomes (recall \eqref{eq:scalingminimal})
\begin{equation}\label{growthctrl12minimal}
 H_{\bar b_t}(\bar \Sigma, \bar e_t, t) \le  \frac{H(\Sigma,r_\star)}{r_\star}\frac{\phi(t r_\star)}{\phi(r_\star)} \qquad \mbox{and}\qquad H_{\bar b_t}(\bar \Sigma, \bar e_t, t)\leq\eta t .
\end{equation}

Noticing that 
\begin{equation}\label{doublingminimal}
\frac{\phi(t r_\star)}{\phi(r_\star)} \le \max(t^{4-n-\alpha}, t^{1+\alpha}) =: \psi(t),
\end{equation}
this gives
\begin{equation}\label{doubling2minimal}
H_{\bar b_t}(\bar \Sigma, \bar e_t, t) \le  \eta_\star \psi(t),\qquad\mbox{with}\quad \eta_\star:= \frac{H(\Sigma,r_\star)}{r_\star} \le \eta \le  \eta_0.
\end{equation}
We want to apply Lemma~\ref{lem:Hu1minimal} to $\bar\Sigma$. Note that \eqref{flatnesshypothesisthmminimal} implies
\begin{equation}\label{boundbrminimal}
|b_r|< CH(\Sigma,r), \quad \mbox{thus} \quad |\bar b_t|< C\frac{H(\Sigma, tr_\star)}{r_\star} \le C\eta t.
\end{equation}
\noindent {\bf Step 2.} Coefficient comparison.

By definition of $H_{\bar b_t}(\bar \Sigma, \bar e_t, t)$ we get
\begin{equation}\label{barsigmainclusionminimal}
    \bar\Sigma\cap (B_{2t}\setminus \overline{B_{t/2}})\subset \left\{|\bar e_t\cdot x-\bar b_t|\leq H_{\bar b_t}(\bar \Sigma, \bar e_t, t)\right\}.
\end{equation}
Applying this also with $2t$ in place of $t$ and combining both results,
\begin{equation}\label{barsigmainclusion2minimal}
    \bar\Sigma\cap (B_{2t}\setminus  \overline{B_{t}})\subset \left\{|\bar e_t\cdot x-\bar b_t|\leq H_{\bar b_t}(\bar \Sigma, \bar e_t, t)\right\}\cap \left\{|\bar e_{2t}\cdot x-\bar b_{2t}|\leq H_{\bar b_{2t}}(\bar \Sigma, \bar e_{2t}, 2t)\right\},
\end{equation}
and it is then not hard to see\footnote{To be precise, since $H_{\bar b_t}(\bar \Sigma, \bar e_t, t)\leq\eta t$ by \eqref{growthctrl12minimal}, up to making $\eta_0$ small enough (recall that $\eta\leq \eta_0$), by Lemma \ref{lem:mf3rt7qwg0tba} and equations \eqref{eq:slab1}--\eqref{eq:slab2} (appropriately rescaled, consider $\widetilde \Sigma_t:= \frac{10}{t}\Sigma$) we can write a piece of $\bar \Sigma\cap (B_{2t}\setminus \overline{B_{t}})$ as a graph in the direction $\bar e_t$. Rewriting \eqref{barsigmainclusion2minimal} for this graph instead, and applying the triangle inequality, the comparison immediately follows.} that
\[
\begin{split}
t|\bar e_{2t} -\bar e_{t}| + |\bar b_{2t}-\bar b_{t}| \le C (H_{\bar b_t}(\bar \Sigma, \bar e_t, t)+ H_{\bar b_{2t}}(\bar \Sigma, \bar e_{2t}, 2t)).
\end{split}
\]

By \eqref{doubling2minimal} we then have
\[
\begin{split}
t|\bar e_{2t} -\bar e_{t}| + |\bar b_{2t}-\bar b_{t}|\le C\eta_\star (\psi(2t)+ \psi(t)),
\end{split}
\]
which summing the geometric series yields (recall $e_\star = \bar e_1$), for $2/r_\star <t<R/(2r_\star)$,
\[
t|\bar e_{t}-  e_\star|+ |\bar b_{t}- \bar b_1| \le   C\eta_\star \psi(t).
\]
Combined with \eqref{doubling2minimal}, this implies (recall Definition~\ref{defi:heightminimal}) that
\[
H_{\bar b_1}(\bar \Sigma, \bar e_1, t) \le \bar \eta_\star\psi(t)\qquad \text{for all}\quad t\in (2/r_\star,R/(2r_\star)), \qquad\mbox{with}\quad \bar \eta_\star:= C\eta_\star\le C \eta_0.
\]
As in the rest of the proof, here $C = C(n, \alpha,C_0)$. Moreover, $|\bar b_1| \le C\eta\le C\eta_0$ by \eqref{boundbrminimal}.

\noindent {\bf Step 3.} Estimate for $r_*$.

By \eqref{flatnesshypothesisthmminimal} we know that $H(\Sigma,r)/r\le \eta$ for all $r\in (2,R/2)$, thus
\[
Q_\star \le \frac{\eta}{r_\star^{3-n-\alpha} + (r_\star/R)^{\alpha}}\le \eta\max\left\{\frac{1}{r_\star^{3-n-\alpha}},\frac{1}{(r_\star/R)^{\alpha}}\right\}.
\]
Applying this and $Q_*\geq C_*\eta$ twice, we find that
\begin{equation}
    \label{eq:boundonrstarminimal}
C_\star^{\frac{1}{n+\alpha-3}}\le (Q_\star/\eta)^{\frac{1}{n+\alpha-3}}\le r_\star \le R (\eta/Q_\star)^{\frac{1}{\alpha}} \le R C_\star^{-\frac{1}{\alpha}} ,
\end{equation}
so that $r_*\in [C_\star^{\frac{1}{n+\alpha-3}}, R C_\star^{-\frac{1}{\alpha}}]$. Since our arguments above were valid for all $t\in (2/r_\star,R/(2r_\star))$, given $\delta>0$ this includes all $t\in (\delta,\frac{1}{\delta})$ up to making $C_*$ large enough.

\noindent {\bf Step 4.} Conclusion.

Let $\ep>0$, to be fixed. Combining all of the above, by choosing $C_\star$ large and $\eta_0$ small (depending only on $n$,  $\alpha$, $C_0$ and $\ep$), the setting of Lemma~\ref{lem:Hu1minimal} is satisfied. This gives
\[
H(\bar \Sigma, 1) \le \ep\bar \eta_\star =C\ep\frac{H(\Sigma,r_\star)}{r_\star} ,
\]
or (since $\bar \Sigma=\frac{1}{r_\star}\Sigma$, and thus $H(\bar \Sigma, 1)=\frac{H(\Sigma,r_\star)}{r_\star}$), equivalently,
$$H(\Sigma,r_\star) \le C \ep H(\Sigma,r_\star) .$$
Once again, we emphasise that $C=C(n,\alpha, C_0)$. Fixing finally $\ep$ so that $C\ep =\frac{1}{2}$, which in turn fixes the choice of $C_\star$ and $\eta_0$ in terms of $n$, $\alpha$ and $C_0$, we arrive at a contradiction. 
\end{proof}

\section{Minimal hypersurfaces with finite Morse index}

An interesting case is obtained by letting $R\to\infty$ in Theorem~\ref{thm:improvminimalH}.  
\begin{corollary}\label{cor:Rtoinftyminimal}
    Let $n\geq 3$. Let $C_0>0$ and $\alpha\in(0,1)$. There are $C=C(n,\alpha,C_0)$ and $\eta_0=\eta_0(n,\alpha,C_0)>0$ such that the following holds:

Let $\Sigma$ be a complete, embedded minimal hypersurface in $\R^n\setminus B_1$.
Let $0<\eta\leq\eta_0$, and assume that for every $r\in [2,\infty)$ we have that $\Sigma\cap (B_{2r}\setminus \overline{B_{r/2}})$ is connected, $|{\rm II}_{\Sigma\cap (B_{2r}\setminus \overline{B_{r/2}})}| \le C_0/r$, and
\begin{equation}\label{flatnesshypothesisthmminimal2}
    H_0(\Sigma,r) \le \eta r.
\end{equation}
Then, $\Sigma$ is graphical in $\R^n\setminus B_4$. More precisely, up to a rotation,
$$\Sigma \cap (\R^n\setminus \overline{B_4})\subset (\R^{n-1}\setminus \overline{B_2'})\times \R,$$
and there is a smooth $f:\R^{n-1}\setminus B_2'\to\R$ such that 
        $$\Sigma\cap \left( (\R^{n-1}\setminus \overline{B_2'}) \times \R\right)= {\rm graph}\, f\,.$$
    Moreover, there is some $b\in\R$ such that
        \begin{equation}\label{assimptotic}
            |f(y)-b|\leq C\frac{\eta}{|y|^{n-4+\alpha}}\qquad\text{for}\quad y\in \R^{n-1}\setminus B_2'. 
        \end{equation}
    \end{corollary}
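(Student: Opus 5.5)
The plan is to apply Theorem~\ref{thm:improvminimalH} on $B_R\setminus\overline{B_1}$ for every $R\ge 4$ and let $R\to\infty$. For fixed $r\ge 2$ the theorem gives
\[
H(\Sigma,r)\le C\eta r\bigl[r^{3-n-\alpha}+(r/R)^\alpha\bigr],
\]
with $C$ independent of $R$. Sending $R\to\infty$ yields $H(\Sigma,r)\le C\eta\, r^{4-n-\alpha}$ for all $r>2$. Thus for each $r$ there are $e_r\in\Sph^{n-1}$ and $b_r\in\R$ with
\[
\Sigma\cap(B_{2r}\setminus\overline{B_{r/2}})\subset\{|e_r\cdot x-b_r|\le C\eta\, r^{4-n-\alpha}\}.
\]
Here $|b_r|\le C\eta r$ by the centered hypothesis \eqref{flatnesshypothesisthmminimal2}, exactly as in \eqref{boundbrminimal}.

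Next I would run the coefficient comparison of Step 2 in the proof of Theorem~\ref{thm:improvminimalH}, now in unrescaled form. Comparing consecutive dyadic scales $r$ and $2r$, via Lemma~\ref{lem:mf3rt7qwg0tba} to write the overlap as a graph, gives
\[
r|e_{2r}-e_r|+|b_{2r}-b_r|\le C\eta\, r^{4-n-\alpha}.
\]
Since $n\ge3$ and $\alpha>0$, we have $r|e_{2r}-e_r|\lesssim \eta r^{4-n-\alpha}$, hence $|e_{2r}-e_r|\lesssim\eta r^{3-n-\alpha}$, which is summable over dyadic $r$. So $e_r\to e_\infty$ with $|e_r-e_\infty|\le C\eta r^{3-n-\alpha}$.

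For the shifts I would compare every scale to the limiting direction $e_\infty$. This gives $\Sigma\cap(B_{2r}\setminus\overline{B_{r/2}})\subset\{|e_\infty\cdot x-b_r|\le C\eta r^{4-n-\alpha}\}$, and then the comparison of $b_r$ at consecutive scales gives $|b_{2r}-b_r|\le C\eta r^{4-n-\alpha}$. This is summable when $n\ge 4$, so $b_r\to b$ with $|b_r-b|\le C\eta r^{4-n-\alpha}$.

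The case $n=3$ is the main obstacle, because there $r^{4-n-\alpha}=r^{1-\alpha}$ grows and the $b_r$ need not converge. The statement's bound $|f-b|\le C\eta|y|^{1-\alpha}$ then only requires $|b_r-b|\lesssim\eta r^{1-\alpha}$ for some fixed $b$. I would obtain this by summing the increments from a base scale: $|b_r-b_2|\le C\eta\sum_{2\le 2^k\le r}2^{k(1-\alpha)}\le C\eta r^{1-\alpha}$, and then take $b:=b_2$.

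Finally, I rotate so that $e_\infty=e_n$. Graphicality over $\R^{n-1}\setminus \overline{B_2'}$ follows by applying Lemma~\ref{lem:mf3rt7qwg0tba}, rescaled, on each annulus. The slab width is $C\eta r^{4-n-\alpha}\le C\eta r$, which is small relative to $r$ once $\eta_0$ is small, so each annulus gives a graph over the corresponding annulus in $\R^{n-1}$. Connectedness forces $N=1$ on each annulus. These graphs glue to a single smooth $f$ on the exterior region, and the inclusion $\Sigma\setminus\overline{B_4}\subset(\R^{n-1}\setminus\overline{B_2'})\times\R$ comes from the slab containment as in \eqref{eq:slab2}. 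The slab bounds with fixed $b$ then give $|f(y)-b|\le C\eta|y|^{4-n-\alpha}$. The remaining care point is the scales near $|y|\sim 2$–$4$, where the annuli $B_{2r}\setminus B_{r/2}$ with $r\ge2$ must still cover the region near $|y|=2$. I would handle this by covering with $r$ slightly larger, or equivalently by adjusting constants, using the bound $|b_r|\le C\eta r$ to absorb the finitely many initial scales.
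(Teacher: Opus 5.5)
Your proposal is correct and follows the paper's proof essentially step for step. You let $R\to\infty$ in Theorem~\ref{thm:improvminimalH}, compare consecutive dyadic slabs to get summable increments for the directions (and for the shifts when $n\ge 4$), and then apply Lemma~\ref{lem:mf3rt7qwg0tba} in the limiting direction. Your handling of $n=3$, taking $b:=b_2$ and summing the growing increments, is equivalent to the paper's choice $b=0$, since $|b_2|\le C\eta$.
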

    \begin{proof}
    We assume that $n>3$; the case $n=3$ follows with an identical proof up to putting $b=0$.
    
    Let $\gamma:=4-n-\alpha<0$. By Theorem~\ref{thm:improvminimalH} applied with $R\to\infty$, we know that
    $$
    H(\Sigma,r)\leq C\eta r^{\gamma}\qquad\mbox{for all}\quad r\in(2,\infty).
    $$
    By definition, there are then $e_k\in\Sph^{n-1}$ and $b_k\in\R$ for every $k\geq 2$ such that
\begin{align}\label{eq:37y4tgweeub}
    \Sigma \cap (B_{2^{k+1}}\setminus \overline{B_{2^{k-1}}})&\subset \{|e_k\cdot x-b_k|\leq C\eta 2^{\gamma k}\}\,.
\end{align}

Applying \eqref{eq:37y4tgweeub} with $k$ and $k+1$, we deduce in particular that
\begin{align}\label{eq:t2831t41gtq}
    \Sigma \cap (B_{2^{k+1}}\setminus \overline{B_{2^{k}}})&\subset \{| e_k\cdot x-b_k|\leq C\eta 2^{\gamma k}\}\cap \{| e_{k+1}\cdot x-b_{k+1}|\leq C\eta 2^{\gamma (k+1)}\}\,.
\end{align}
Then,  the local graphicality (coming from Lemma~\ref{lem:mf3rt7qwg0tba}) of $\Sigma$ and the triangle inequality  give that
$$2^k|e_{k+1}-e_k|+|b_{k+1}-b_k|\leq C\eta 2^{\gamma k}\,.$$
From this, we find that
$$\sum_{i\geq k} |e_{i+1}-e_i|\leq \sum_{i\geq k} C\eta2^{(\gamma-1)i} \leq C_\alpha\eta 2^{(\gamma-1) k}\qquad\mbox{and}\qquad \sum_{i\geq k} |b_{i+1}-b_i| \leq C_\alpha\eta 2^{\gamma k},$$
and in particular the sequences $e_k$ and $b_k$ are both Cauchy. This shows the existence of some $e_\infty\in \Sph^{n-1}$ and $b_\infty\in\R$ with
\begin{equation*}
    |e_\infty-e_{k}|\leq \sum_{i\geq k} |e_{i+1}-e_i|\leq C_\alpha\eta 2^{(\gamma-1)k}\quad\mbox{and}\quad |b_\infty-b_{k}|\leq \sum_{i\geq k} |b_{i+1}-b_i|\leq C_\alpha\eta 2^{\gamma k}\,,
\end{equation*}
thus
\begin{equation}\label{eq:q7toagvg2}
    \Sigma \cap (B_{2^{k+1}}\setminus \overline{B_{2^{k-1}}})\subset \{|e_\infty\cdot x-b_\infty|\leq C_\alpha\eta 2^{\gamma k}\}\,\quad \forall k \ge 2. 
\end{equation}

Up to making $\eta_0>0$ smaller (recall that $\eta\leq\eta_0$), we can now just apply Lemma~\ref{lem:mf3rt7qwg0tba}---appropriately rescaled---to $\Sigma\cap B_{2^k}\setminus \overline{B_{2^{k-1}}}$ for every $k\geq 2$. This shows that, up to a rotation (so that $e_\infty = e_n$), 
$$\Sigma \cap (\R^n\setminus \overline{B_4})\subset (\R^{n-1}\setminus \overline{B_2'})\times \R$$
and moreover $\Sigma\setminus \left(\overline{B_{2}'}\times[-2,2]\right)$ is the graph of some $f:\R^{n-1}\setminus \overline{B_2'}\to\R$. Additionally, \eqref{eq:q7toagvg2} gives that
\begin{align}\label{eq:89ygoiubjbad}
    \Sigma \cap (B_{2^{k+1}}\setminus \overline{B_{2^{k-1}}})&\subset \{|x_n-b_\infty|\leq C_\alpha \eta 2^{\gamma k}\}\,,\quad\mbox{thus}\quad |f(x')-b_\infty|\leq CC_\alpha |y|^\gamma,
\end{align}
as we wanted.
\end{proof}
    This expansion can actually be a posteriori  improved to a higher order one:
    
    \begin{lemma}\label{lem:fundsolterm}
        In the setting of Corollary~\ref{cor:Rtoinftyminimal}, there are additionally some $d\in \R^{n-1}$ and $c\in\R$ such that
        $$\left|f-b-\frac{c}{|y|^{n-3}} - \frac{d\cdot y}{|y|^{n-1}}\right|\leq C\frac{\eta}{|y|^{n-2+\alpha}}\,,$$
            where $c|y|^{3-n}$ is replaced by $c\log |y|$ if $n=3$.
    \end{lemma}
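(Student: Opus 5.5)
Set $w:=f-b$, with $b$ and $\gamma=4-n-\alpha$ as in Corollary~\ref{cor:Rtoinftyminimal}, so that $|w(y)|\le C\eta|y|^{\gamma}$ on $\R^{n-1}\setminus B_2'$. The plan is to regard the minimal graph equation as a perturbation of the Laplacian,
\[
\Delta w = F:=\frac{\partial_i f\,\partial_j f\,\partial_{ij} f}{1+|\nabla f|^2},
\]
and to expand $w$ in exterior harmonic functions plus an error coming from $F$.

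\textbf{Step 1: scaled estimates.} Rescale $w$ to the unit annulus at each dyadic scale $r$. Since the equation is uniformly elliptic with smooth coefficients close to ${\rm Id}$, Schauder estimates give $|\nabla^k w(y)|\le C\eta|y|^{\gamma-k}$ for $k=1,2,3$. These yield the source bound
\[
|F(y)|\le C\eta^3|y|^{3\gamma-4}.
\]
For $n\ge 4$ we have $3\gamma-4\le -n-\alpha-2\alpha$, which is strictly more negative than $-n-\alpha$. For $n=3$ the decay from the corollary is only $|w|\le C\eta|y|^{1-\alpha}$, which is too weak. In that case I would first upgrade it: in two variables the graph is harmonic up to a cubic error, so $|\nabla f|\to 0$, and a logarithmic growth bound can be bootstrapped. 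Alternatively one can use $\gamma$ with $\alpha$ close to $1$ and iterate the argument below, each pass improving the decay.

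\textbf{Step 2: solving the inhomogeneous problem.} Set $u:=\Gamma*(F\chi_{|y|>2})$, with $\Gamma$ the Newtonian kernel in $\R^{n-1}$. Using a weighted estimate for $\Delta^{-1}$ acting on $|y|^{-m}$ with $m>n+\alpha$ noninteger-resonant, I would show
\[
u = \frac{c'}{|y|^{n-3}}+\frac{d'\cdot y}{|y|^{n-1}}+O\big(\eta|y|^{2-m}\big).
\]
This comes from splitting $\Gamma(y-z)$ into its Taylor expansion in $z$ up to first order when $|z|<|y|/2$, and estimating the near and far regions directly. The moments $\int F$ and $\int zF$ converge precisely because $m>n$; these moments produce $c'$ and $d'$. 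Since $2-m\le 2-n-\alpha$, the remainder is of the required order.

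\textbf{Step 3: the harmonic part.} The function $h:=w-u$ is harmonic in $\R^{n-1}\setminus \overline{B_2'}$ and is $o(1)$ at infinity, because $\gamma<0$ for $n\ge4$. Its Laurent (spherical harmonics) expansion therefore contains only decaying modes:
\[
h=\frac{c''}{|y|^{n-3}}+\frac{d''\cdot y}{|y|^{n-1}}+O(|y|^{1-n}).
\]
The coefficients are bounded by $C\eta$ via the sup bound on $|y|=3$. Summing Steps 2 and 3 gives the claim, with error $O(\eta|y|^{1-n})+O(\eta|y|^{2-n-\alpha})$, and $1-n<2-n-\alpha$.

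\textbf{Main obstacle.} The hard part is $n=3$, where $\log|y|$ appears and the initial decay is weak. Here $h$ may have a $c\log|y|$ term and need not vanish at infinity, so the sublinear growth must be used to kill the linear modes. In addition, $F$ decays only like $|y|^{3\gamma-4}$ with $\gamma$ near $0$, so Step 2 must be iterated. Starting from $|w|\lesssim |y|^{1-\alpha}$, we get $\nabla w\lesssim |y|^{-\alpha}$ and $F\lesssim |y|^{-1-3\alpha}$. Solving then improves $w$ to $b+c\log|y|+O(|y|^{1-3\alpha})$, and I would repeat until the remainder decays faster than $|y|^{-1-\alpha}$ (taking $\alpha$ fixed and iterating finitely many times). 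This iteration is where I expect the bookkeeping to be most delicate.
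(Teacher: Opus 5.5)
Your proposal is correct. It rests on the same decomposition as the paper: the Newtonian potential of the cubic error $F$, plus a harmonic remainder, expanded to first order. The difference is that you carry this out directly at infinity, whereas the paper first Kelvin-transforms.

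\textbf{The case $n\ge 4$.} The paper sets $v(x)=|x|^{3-n}(f-b)(x/|x|^2)$, which turns $|F|\lesssim\eta|y|^{-(n+\bar\alpha)}$ into $|\Delta v|\lesssim\eta|x|^{\bar\alpha-1}$ near $0$. It then reads off $c=v(0)$ and $d=-\nabla v(0)$ from a $C^{1,\bar\alpha}$ Taylor expansion. The two routes correspond term by term:
\begin{itemize}
\item Your requirement that $h=o(1)$ kill the constant and growing modes is exactly the paper's removability of the singularity of $v$ at $0$, which uses $|v|\lesssim\eta|x|^{\alpha-1}=o(|x|^{3-n})$.
\item Your moments $\int F$ and $\int zF$ are the Kelvin counterparts of the potential's contribution to $v(0)$ and $\nabla v(0)$.
\end{itemize}
Your route makes the coefficients explicit and avoids quoting $C^{1,\bar\alpha}$ regularity for a singular right-hand side. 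The paper's route packages all the multipole bookkeeping into one local estimate.

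One small correction in Step 2: the remainder is really $O(\eta(|y|^{2-m}+|y|^{1-n}))$. The quadrupole term dominates once $m>n+1$, which happens for $n\ge 5$. This is harmless, or you can simply take $m\in(n+\alpha,n+1)$, since $F$ decays faster anyway.

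\textbf{The case $n=3$.} Here the approaches differ more substantively.
\begin{itemize}
\item The paper does not iterate. It applies the corollary with $\alpha$ close to $1$, so that $3\alpha\ge 2+\bar\alpha$ and $|F|\lesssim\eta|y|^{-3-\bar\alpha}$ in a single pass. The price is an error exponent $\bar\alpha\le 3\alpha-2<\alpha$, and possibly a smaller $\eta_0$.
\item Your iteration keeps the original $\alpha$ and does terminate, though it needs two adjustments you did not spell out:
\begin{itemize}
\item If $1+3\alpha\le 2$, then $\int F$ diverges, so the first passes need the renormalized kernel $\log|y-z|-\log|z|$ rather than the moment expansion of Step 2.
\item Each pass improves the gradient decay exponent from $\alpha$ to $\min(1,3\alpha)$, up to harmless logarithms at resonant exponents.
\end{itemize}
\end{itemize}
Once $|\nabla w|\lesssim\eta|y|^{-1}$ (the $c\log|y|$ term caps the improvement there), you have $F=O(\eta^3|y|^{-4})$. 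One more pass then gives a final error $O(\eta|y|^{-2}\log|y|)$, which is better than the required $|y|^{-1-\alpha}$.
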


\begin{proof}
By the asymptotics from Corollary~\ref{cor:Rtoinftyminimal} (see \eqref{assimptotic})
and standard interior estimates for minimal graphs we obtain, for $|y|>M$, 
\begin{equation}\label{eq:C2decay}
|f(y)-b|+|y|\,|\nabla f(y)|+|y|^2|D^2f(y)|
\le C\eta\,|y|^{4-n-\alpha}.
\end{equation}
Set $u:=f-b$. As in \eqref{eq:TrA_nondiv} we have
\[
\sum_{i, j = 1}^{n-1} \Big(\delta_{ij}-\frac{u_i u_j}{1+|\nabla u|^2}\Big)u_{ij}=0,
\qquad\text{hence}\qquad
\Delta u=\sum_{i, j = 1}^{n-1} \frac{u_i u_j}{1+|\nabla u|^2}\,u_{ij}.
\]
Therefore, also using   \eqref{eq:C2decay} we have
\[
|\Delta u(y)|\le C|\nabla u(y)|^2|D^2u(y)|\le C\eta^3 |y|^{8-3n-3\alpha}\le C\eta |y|^{8-3n-3\alpha}\qquad\text{for}\quad |y| > R. 
\]
Fix $\bar\alpha\in(0,1)$ and choose $\alpha\in(0,1)$ so that
$8-3n-3\alpha\le -(n+\bar\alpha)$ (for $n=3$ it suffices e.g.\ $\alpha\ge (2+\bar\alpha)/3$).
Then 
\begin{equation}\label{eq:lapdecay}
|\Delta u(y)|\le C\eta\,|y|^{-(n+\bar\alpha)}\qquad\text{for}\quad |y|>M.
\end{equation}

Now consider the Kelvin transform on $0<|x|<1/M$ with respect to $B_1'\subset \R^{n-1}$,
\[
v(x):=|x|^{2-(n-1)}\,u \Big(\frac{x}{|x|^2}\Big)
\qquad\text{so that}\qquad \Delta v(x)=|x|^{-n-1}\,\Delta u\Big(\frac{x}{|x|^2}\Big).
\]
With $y=x/|x|^2$ (so $|y|=|x|^{-1}$), \eqref{eq:lapdecay} yields
\begin{equation}\label{estlaplacian}
    |\Delta v(x)|\le C\eta\,|x|^{\bar\alpha-1}\qquad\text{for}\quad x\in B'_{1/M}.
\end{equation}

Moreover \eqref{eq:C2decay} implies $|v(x)|\le C\eta |x|^{\alpha-1}$ for $x\in B'_{1/M}$.

Assume now $n-1\ge 3$. Standard Newton-potential estimates\footnote{Write $v = \Gamma_{n-1} * (\Delta v {\mathbbm {1}}_{B'_{M}\setminus\{0\}}) + h$ where $\Gamma_{n-1}$ is the fundamental solution of the Laplacian in $\R^{n-1}$ and $h$ is a harmonic function. (We use  $|v(x)|\le C\eta |x|^{\alpha-1}$ to show that the singularity of $h$ at $0$ is removable if $n\ge4$, or removable after subtracting a suitable multiple of $\log$ for $n-1=2$.)}
  imply that $v$ extends across $0$ with $v\in C^{1,\bar\alpha}$
and, in particular,
\[
|v(x)-v(0)-\nabla v(0)\cdot x|\le C\eta\,|x|^{1+\bar\alpha}\qquad\text{for}\quad x\in B'_{1/M}.
\]
Set $c:=v(0)$ and $d_i:=-\partial_i v(0)$. Writing $x=y/|y|^2$ and undoing the Kelvin
transform gives, for $|y|$ large,
\[
u(y)=|y|^{2-(n-1)}\,v\!\Big(\frac{y}{|y|^2}\Big)
=\frac{c}{|y|^{n-3}}-\frac{d\cdot y}{|y|^{n-1}}
+O\!\Big(\frac{\eta}{|y|^{n-2+\bar\alpha}}\Big).
\]
Equivalently,
\[
\left|f-b-\frac{c}{|y|^{n-3}} +\frac{d\cdot y}{|y|^{n-1}}\right|
\le C\frac{\eta}{|y|^{n-2+\bar\alpha}}.
\]

A similar argument works  $n=3$, up to subtracting the right  multiple of the logarithmic fundamental solution.
\end{proof}

    To obtain Theorem~\ref{thm:TyskIndex}, we need a final lemma:
    \begin{lemma}\label{lem:TyskHypotheses}
        Let $3\leq n \leq 7$. Let $\Sigma\subset\R^n$ be a complete, embedded minimal hypersurface with finite Morse index, satisfying ${\rm Area}(\Sigma\cap B_R)\leq CR^{n-1}$ for some $C$ and all $R>0$.

        Then, there are some $R_0,C_0>0$ and $N\in\N$ such that the following hold:
        \begin{itemize}
            \item Let $\eta_0>0$. There is some $R_1=R_1(\Sigma,\eta_0)\geq 8R_0$ such that,  for every $r>R_1$, $H_0(\Sigma,r)\leq \eta_0 r$.
            \item $\Sigma\setminus \overline{B_{R_0}}=\bigcup_{i=1}^N \Sigma_i$, where the $\Sigma_i$ are embedded minimal hypersurfaces with $\overline \Sigma_i\setminus \Sigma_i\subset \partial B_{R_0}$. 
            \item For each $1\le i \le N$, $\Sigma_i\cap (B_{2r}\setminus \overline{B_{r/2}})$ is connected for every $r>2R_0$, and $|\mathrm{II}_{\Sigma_i\cap (B_{2r}\setminus \overline{B_{r/2}})}|\leq C_0/r$.
        \end{itemize}
    \end{lemma}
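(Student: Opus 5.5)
The plan is to combine three classical ingredients: stability outside a compact set, curvature estimates for stable minimal hypersurfaces in low dimensions, and a blow-down argument using the area bound. I will carry them out in the following order.

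\textbf{Step 1: curvature decay.} Finite Morse index implies that $\Sigma$ is stable outside some ball $B_M$; this is the standard fact recalled in Remark~\ref{rmk:finindcond}, proved by choosing disjointly supported test functions. For $3\le n\le 7$ we then invoke the Schoen--Simon--Yau / Schoen--Simon curvature estimates for stable minimal hypersurfaces with area bounds; in $n=3$ Schoen's estimate suffices. For $|p|\ge 2M$, these are applied in the ball $B_{|p|/2}(p)$, which lies in the stable region, and the Euclidean area growth supplies the area bound needed there. This gives
\[
|\mathrm{II}_\Sigma(p)|\le \frac{C_0}{|p|}.
\]

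\textbf{Step 2: blow-down and flatness.} Consider the rescalings $\Sigma/r$ as $r\to\infty$. By the area bound and Step 1, these have locally bounded curvature and area away from the origin. Hence every sequence subconverges smoothly on compact subsets of $\R^n\setminus\{0\}$, with finite multiplicity, to a stable minimal cone that is smooth away from $0$; its area density is bounded by $C$. In dimensions $n\le 7$ such a cone is a hyperplane through the origin, by Simons' classification of stable cones; the only singular point being the origin helps here. Multiplicity is bounded by the density bound.

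Arguing by contradiction with a sequence $r_k\to\infty$ along which $H_0(\Sigma,r_k)>\eta_0 r_k$ gives the first bullet. Indeed, smooth convergence of $\Sigma/r_k$ on $\overline{B_2}\setminus B_{1/2}$ to a plane through $0$ (with multiplicity) yields $H_0(\Sigma/r_k,1)\to 0$, contradicting the choice of $r_k$.

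\textbf{Step 3: decomposition into ends.} Fix $\eta$ small and choose $R_0\ge 2M$ so large that $H_0(\Sigma,r)\le\eta r$ for all $r\ge R_0/4$. Lemma~\ref{lem:mf3rt7qwg0tba}, rescaled to each annulus, then shows that $\Sigma\cap(B_{2r}\setminus\overline{B_{r/2}})$ consists of $N(r)$ ordered graphs over an annular region of the approximating plane. Adjacent annuli overlap, so $N(r)$ is locally constant in $r$ and therefore equal to a fixed $N$. It is finite since it is bounded by the density.

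Following each sheet continuously as $r$ increases, we define $\Sigma_i$ to be the union of the $i$-th sheets, restricted to $\Sigma\setminus\overline{B_{R_0}}$. A little care is needed near $\partial B_{R_0}$ so that the pieces close up correctly; $R_0$ may be enlarged slightly to get transversality. Connectedness of $\Sigma_i\cap(B_{2r}\setminus\overline{B_{r/2}})$ holds because each piece is a single graph over a connected annular domain, using $n-1\ge 2$. The curvature bound is inherited from Step 1. The $\Sigma_i$ are then the connected components, each embedded with boundary on $\partial B_{R_0}$.

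\textbf{Main obstacle.} I expect the hardest step to be Step 3's patching. The approximating planes $e_r$ may rotate slowly with $r$, so the identification of sheets across annuli must use the overlap region of consecutive annuli together with smallness of $\eta$. This rules out sheets exchanging order, which embeddedness forbids anyway. Step 2's dimensional input is quoted from the literature.
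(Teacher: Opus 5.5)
Your proposal is correct and follows essentially the same route as the paper: stability outside a ball from finite index, the Schoen--Simon curvature decay $|\mathrm{II}_\Sigma|\le C_0/|x|$, a blow-down contradiction argument closed by Simons' classification, and the decomposition into ends by running the argument of Lemma~\ref{lem:mf3rt7qwg0tba} annulus by annulus. You leave implicit why blow-down limits are cones; as in the paper, this follows from the monotonicity formula together with the bounded density ratio. Also, take $R_0$ a large enough multiple of $M$ so that every annulus used in Step~3 lies where the curvature bound of Step~1 holds.
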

    \begin{remark}\label{rmk:finindcond2}
    Similarly to the discussion in Remark~\ref{rmk:finindcond} in Theorem~\ref{thm:TyskIndex}, the finite index condition is only used to obtain stability away from some ball $B_M$, and fixing $C$ and $M$ in Lemma~\ref{lem:TyskHypotheses} would give a uniform bound on $R_0,C_0,N$.
\end{remark}
    \begin{proof}
        The finite index condition implies that $\Sigma$ is stable outside of some large ball $B_{R_0}$; we will make $R_0$ larger in what follows when needed. By the area bounds and the restriction $3\leq n \leq 7$, the estimates in \cite{SS81} imply that $\Sigma$ satisfies curvature estimates $|\mathrm{II}_\Sigma|\leq \frac{C_0}{|x|}$ for some uniform constant $C_0$.
        
        We start by showing the first bullet. Assume, for contradiction, that there is $\eta_0>0$ and a subsequence $r_k\to\infty$ with $H_0(\Sigma,r_k)> \eta_0 r_k$. Consider $\widetilde \Sigma_k:=\frac{1}{r_k}\Sigma$, which satisfy then $H_0(\widetilde \Sigma_k,1)> \eta_0$. Now, the area and curvature estimates (together with standard bootstrap estimates for minimal hypersurfaces and Arzel\`a--Ascoli theorem) imply that, up to passing to a subsequence, the sequence $\{\widetilde \Sigma_k\}_k$ converges in $C^2_{loc}(\R^n\setminus \{0\})$ to a regular (away from the origin), stable minimal hypersurface $\mathcal C$. Moreover, the monotonicity formula implies that $\mathcal C$ is actually a cone. But then, Simons' classification \cite{Simons68} implies that $\mathcal C$ is a hyperplane, thanks to the restriction $3\leq n \leq 7$ once again, giving a contradiction with $H_0(\widetilde \Sigma_k,1)> \eta_0$ for $k$ large enough.
        
        Now, let $\Sigma_i$ denote the components of $\Sigma$ away from a large ball $B_{R_0}$. With the first bullet at hand and the curvature estimates, arguing exactly as in Lemma~\ref{lem:mf3rt7qwg0tba} with each component $\Sigma_i$ (see \cite[Lemma 3.4]{Flo25} for full details),  gives the two last bullets and concludes the proof.
    \end{proof}
    Then, we can give:
    \begin{proof}[Proof of Theorem~\ref{thm:TyskIndex}]
    It suffices to consider the $\Sigma_i$ given by Lemma~\ref{lem:TyskHypotheses} and to apply Corollary~\ref{cor:Rtoinftyminimal} and Lemma~\ref{lem:fundsolterm} (appropriately rescaled) to them\footnote{To be precise, Corollary~\ref{cor:Rtoinftyminimal} gives some $e_i\in\Sph^{n-1}$, possibly depending on $i$, for which its thesis holds for $\Sigma_i$. To obtain Theorem~\ref{thm:TyskIndex}, we need instead a single $e\in\Sph^{n-1}$ which works simultaneously for all of the $\Sigma_i$. However, embeddedness immediately forces all of the $e_i\in\Sph^{n-1}$ to coincide, as otherwise the $\Sigma_i$ would intersect each other along an $(n-2)$-dimensional submanifold.}.
    \end{proof} 


\printbibliography

\end{document}